\theoremstyle{plain}
\newtheorem{theorem}{Theorem}[section]
\newtheorem{lemma}[theorem]{Lemma}
\theoremstyle{definition}
\newtheorem{remark}[theorem]{Remark}
\begin{document}
		\title{Further results on arithmetic properties of biregular  overpartitions  } 
	
	\author{Suparno Ghoshal and Arijit Jana}
	\address{Department of Computer Science, Ruhr University Bochum, Germany}
	\email{suparno.ghoshal@rub.de}
	\address{Department of Mathematics, National Institute of Technology Silchar, Assam 788010, India}
	\email{jana94arijit@gmail.com}
	
	\thanks{2020 \textit{Mathematics Subject Classification.}  11P83, 05A15, 05A17.\\
		\textit{Keywords and phrases. Overpartition, Congruence, Regular, Biregular}}
	
	\begin{abstract}
		Recently there has been quite a bit of study carried out related to arithmetic properties of overpartitions into non-multiples of two co-prime integers. The paper \cite{Nadji} by Nadji et al. looked into congruences modulo $3$ and powers of $2$ for certain specific pairs of co-prime integers, while the paper \cite{AMS} by Alanazi et al. investigated some congruences related to some similar and some different pairs of co-prime integers. In this paper we propose some elegant and elementary proofs of a subset of the congruences given in \cite{AMS} by using only theta function and dissection identities. We also propose a generic method for proving congruences modulo $8$ which doesn't necessarily use any specific $2$-dissection. 
	\end{abstract}
	\maketitle
\section{Introduction}
The partition function $p(n)$ is usually defined as a function which counts the total number of legitimate partitions of a positive integer $n$. A ``legitimate" partition of a positive integer $n$ is defined as a non-increasing sequence of positive integers $\lambda=(\lambda_1, \lambda_2, \ldots, \lambda_k)$ which sums to $n$. Each $\lambda_i$ usually denotes "parts" of a partition. The generating function of $p(n)$ is written as 
\begin{equation}\label{eq0}
\sum_{n\geq 0}p(n)q^n = \prod\limits_{n=1}^{\infty }\frac{1}{1-q^n}= \frac{1}{f_1}.
\end{equation}
Here $q \in \mathbb{C}$  and $|q| < 1$. Throughout this paper we use the notation $f^k_n$ is defined by $f^k_n:=\prod_{j\geq 1}(1-q^{jn})^k$ for all integers $n,k$ with $n>0$.\\
Overpartitions are combinatorial structures usually associated with the $q$- Binomial theorem, Heine's transformation and Lebesgue's identity. An overpartition of $n$ is a partition of $n$ where the first occurrence of each part size may be overlined. We denote the number of overpartitions of $n$ by $\overline{p}(n)$, with $\overline{p}(0)=1$. For example, $\overline{p}(3)=8$, which enumerates the following overpartitions
$$(3),\, (\overline{3}),\, (2,1),\, (\overline{2},1),\ (2,\overline{1}),\, (\overline{2},\overline{1}),\, (1,1,1),\, (\overline{1},1,1).$$
The three overpartitions with no overlined parts are the ordinary partitions of 3.
We will also use the alternative notation of a partition: $\lambda=(c_1^{u_1},c_2^{u_2},\ldots,c_r^{u_r})$, where $c_1>c_2>\cdots >c_r>0$ and $r\leq k$. 
The following generating function for counting the number of overpartitions of a positive integer $n$ was introduced by Corteel and Lovejoy \cite{corteel1}
\begin{align}
\sum_{n = 0}^{\infty} \overline{p(n)}q^{n} = \prod_{n = 1}^{\infty} \dfrac{1 + q^{n}}{1 - q^{n}} = \dfrac{f_{2}}{f_{1}^{2}}.
\end{align}
There is an extensive literature studying the various arithmetic properties of the function $\overline{p(n)}$. One such well-known congruence was due to Hirschhorn and  Sellers \cite{hirs1}, where they had proved the following
\begin{align}
\overline{p}(27n + 18) \equiv 0 \pmod{3},\\
\overline{p}(27n) \equiv \overline{p}(3n) \pmod{3}.
\end{align}
For interest of the readers, we cite the following works \cite{kim1,kim2, xia1,xia2} which delves deeper into the area of \textit{overpartitions}. Several theorems and techniques used in partitions have been shown to have analogues in the study of overpartitions.\\

Given a positive integer $\ell > 1$ a partition $\lambda$ is called $\ell$-regular if no part of $\lambda$ is a multiple of $\ell$.
The generating function $b_{\ell}(n)$ denoting the $\ell$-regular partitions of $n$ is given by
\begin{align}
\sum_{n = 0}^{\infty} b_{\ell}(n)q^{n} = \dfrac{f_{l}}{f_{1}}.
\end{align}
The function $b_{5}(n)$ is odd only when $12n + 1$ is a perfect square was proved by Hirschhorn and Sellers \cite{hirs2}. An infinite family of congruences modulo $2$ was studied by Cui and Gu \cite{cui1} for $\ell$-regular partitions for some particular values of $\ell \in \{2, 4, 5, 8, 13, 16\}$. \\

Lovejoy \cite{lovejoy1} had studied the $\ell$- regular overpartition function which has the following generating function 
\begin{align}
\sum_{n = 0}^{\infty} \overline{A}_{\ell}(n)q^{n} = \dfrac{f_{l}^{2}f_{2}}{f_{1}^{2} f_{2l}}.
\end{align}
The generating function for $\ell$- regular overpartitions was given by Shen \cite{Shen}. In the same paper, Shen introduced some congruences for $\ell = 3$. Lovejoy in \cite{lovejoy1} had further given results related to overpartitions which were analogous to Gordon's \cite{Gordon1} famous generalization of the celebrated Rogers-Ramanujan identities. \\
Munagi and Sellers \cite{MunagiSellers2013} studied combinatorial and arithmetic properties of overpartitions of $n$ with $\ell$-regular overlined parts, denoted by $A_{\ell}(n)$. Alanazi and Munagi \cite{AlanaziMunagi} investigated the combinatorial identities for  $\ell$-regular overpartitions of $n$, denoted by $\overline{R_{\ell}}(n)$ which is the same as $\overline{A}_{\ell}(n)$. In addition, Alanazi et al. \cite{AlanaziAlenaziKeithMunagi} studied combinatorial and arithmetic properties of the overpartitions of $n$ with $\ell$-regular non-overlined parts. We also note that Alanazi et al. \cite{AlanaziMunagiSellers} had discussed various arithmetic properties of $\overline{R_{\ell}}(n)$. We also recall the generating function from \cite{AlanaziAlenaziKeithMunagi}
\begin{equation}\label{eq1}
\sum_{n\geq 0}\overline{R_\ell^\ast}(n)q^n = \prod\limits_{n=1}^{\infty }\frac{(1-q^{n\ell})(1+q^n)}{1-q^n} = \frac{f_2f_\ell}{f_1^2},
\end{equation}
where $\overline{R_\ell^\ast}(n)$ counts overpartitions of $n$ wherein non-overlined parts are $\ell$-regular and there are no restrictions on the overlined parts. Alanazi et al. \cite{AlanaziAlenaziKeithMunagi} proved several interesting results related to this function, some of which were recently extended by Sellers \cite{SellersBAMS}.

Recently in the papers \cite{AMS,Nadji},  the respective authors have delved deeper into the arithmetic properties of overpartitions which are simultaneously $\ell$-regular and $\mu$-regular with $\gcd(\ell, \mu) = 1$. This \cite{AMS} paper discussed proofs of several congruences related to $(\ell, \mu)$- regular overpartitions for certain $(\ell, \mu) \in \{(2, 3), (4, 3), (4, 9), (8, 27), (16, 81)\}$. The proof technique used in their paper for proving these congruences were based on some mathematica implementation of an algorithm of Radu \cite{Radu, Radu2} due to Smoot \cite{Smoot}. While in this \cite{Nadji} paper they have discussed congruences modulo 3 and powers of $2$ for certain $(\ell, \mu)$ pairs, more specifically $(\ell, \mu) \in \{(4, 3), (4, 9), (8, 3), (8, 9)\}$.  It was mentioned in the conclusion of this \cite{AMS} paper that it is desirable to have some elementary proofs of those congruences.  The main objective of our paper is to come up with elementary proofs of some of those congruences discussed in \cite{AMS} while also not providing proofs of similar nature for the congruences which were already proved in \cite{Nadji}. For example, in this paper we provide congruences for the pair $(\ell, \mu) = (8, 27)$ modulo $8$ which was not discussed in this \cite{Nadji} paper and although this \cite{AMS} paper have a proof of that, they make use of Radu's algorithm \cite{Radu, Radu2} for the proof which is independent of our proof strategy. We have mentioned in \textit{remarks} in each section specifically where we have extended the results in \cite{Nadji} and where we have reproved new congruences stated in \cite{AMS} by only using theta function or dissection identities.   \\
In this paper as we are concerned with $(\ell,\mu)$-biregular overpartitions of $n$, we go with the generating function which was used in these papers \cite{AMS}, \cite{Nadji}. Assume $\overline{R_{\ell,\mu}}(n)$ to be the number of overpartitions of $n$ where no parts are divisible by $\ell$ or $\mu$, where $\gcd (\ell,\mu)=1$. 
The generating function of such a function is as follows
\begin{equation}\label{eq2}
\sum_{n\geq 0}\overline{R_{\ell,\mu}}(n)q^n=\prod\limits_{n=1}^{\infty }\frac{ (
	1+q^{n})( 1-q^{\ell n})( 1-q^{\mu n})( 1+q^{\ell\mu n}) }{( 1-q^{n})( 1+q^{\ell n})( 1+q^{\mu n})( 1-q^{\ell\mu n}) } =\frac{f_2f_\ell^2f_\mu^2f_{2\mu\ell}}{f_1^2f_{2\ell}f_{2\mu}f_{\mu\ell}^2}.
\end{equation}

\subsection{Organization}
The paper is organized in the following manner. In Section \ref{preliminaries} we provide a brief overview of all the necessary results which we will require for proving our congruences in an elementary way. Further in Sections \ref{(2,3)_result}, \ref{(4,3)-result}, \ref{(4, 9)-result}, \ref{(8, 27)-result}, \ref{(16, 81)-result} we provide proofs for the congruences modulo $3 \text{ and } 8$ satisfied by the function $\overline{R_{\ell, \mu}}(n)$ for the following pairs $(\ell, \mu) \in \{(2, 3), (4, 3), (4, 9), (8, 27), (16, 81)\}$. We conclude in Section \ref{Conclude} with some remarks on our work and possible future work.

\section{Preliminaries}\label{preliminaries}
In this section we discuss some of the notations, definitions and important Lemmas and identities which will be used in the entire paper.
We begin by first stating the general mathematical definition of Ramanujan's theta function $f(a, b)$ \cite[Equation 1.2.1]{Spirit}
\begin{align*}
f(a,b) := \sum_{k=-\infty}^{\infty} a^{\frac{k(k+1)}{2}}b^{\frac{k(k-1)}{2}}, \quad |ab| < 1.
\end{align*}
Now we present the three important special cases of $f(a ,b)$ which we will need for proving our results and also for writing some of the generating functions. It is important to note that all of the product representations in the expressions below directly follow from Jacobi's triple product identity \cite[p. 35, Entry 19]{bcb3}
\begin{align}
\varphi(q) &:= f(q,q) = (-q;q^2)^2_\infty(q^2;q^2)_\infty=\dfrac{f_2^5}{f_1^2f_4^2},\label{varphi}\\
\psi(q) &:= f(q,q^3) = \frac{(q^2;q^2)_\infty}{(q;q^2)_\infty}=\dfrac{f_2^2}{f_1},\label{psi}\\
f(-q) &:= f(-q,-q^2) = (q;q)_\infty=f_1, \notag
\end{align}
Next we state a couple of $\chi$ function identities which are essential for discussing the Lemma \eqref{lemma:lem1} and also will be used in some of the proofs in the paper,
\begin{align}
\chi(q):=(-q;q^2)_\infty=\dfrac{f_2^2}{f_1f_4}, \label{chiq}\\
\chi(-q)=(q;q^2)_\infty=\dfrac{f_1}{f_2}, \label{chi-q} 
\end{align}

\noindent The identities below follow by replacing $q$ with $-q$ in \eqref{varphi} and \eqref{psi}
\begin{align}
\varphi(-q)&=\frac{f_1^2}{f_2},\label{varphi(-q)}\\
\psi(-q)&=\frac{f_1f_4}{f_2}.\label{psi(-q)}
\end{align}
Combining \eqref{psi} and \eqref{varphi(-q)} we get the following identities
\begin{align}\label{varphi psi}
\varphi(-q)\psi(q)=f_1 f_2.
\end{align}
Below we re-state two identities related to Ramanujan's theta function which we will use later in the paper,
\begin{lemma}\cite[p. 51 and p. 350]{bcb3}\label{lemma:lem1}
	We have
	\begin{align}
	\label{f(q,q^5)} f(q,q^5)&=\psi(-q^3)\chi(q),\\
	\label{f(q,q^2)}f(q,q^2)&=\frac{\varphi(-q^3)}{\chi(-q)},
	\end{align}
	
\end{lemma}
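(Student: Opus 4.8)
```latex
\textbf{Proof proposal.}
The plan is to prove both identities in Lemma \ref{lemma:lem1} by expressing each side as a product of theta-quotients and then verifying equality of the resulting infinite products. The two statements are of the same flavour, so I would treat them in parallel. In each case the left-hand side is a ``lopsided'' theta function $f(a,b)$ with $a \neq b$, while the right-hand side is built from the standard functions $\varphi$, $\psi$, $\chi$ whose product forms are already recorded in \eqref{varphi}--\eqref{psi(-q)}. The key tool throughout is the Jacobi triple product identity, which the excerpt already invokes to justify those product representations.

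First, for \eqref{f(q,q^5)}, I would apply the Jacobi triple product identity directly to $f(q,q^5)=\sum_{k=-\infty}^{\infty} q^{k(k+1)/2}\,q^{5k(k-1)/2}$. Writing the exponent as $3k^2-2k$ identifies the relevant modulus: with $ab=q^6$, Jacobi's identity gives
\begin{align*}
f(q,q^5)=(-q;q^6)_\infty(-q^5;q^6)_\infty(q^6;q^6)_\infty.
\end{align*}
On the right-hand side I would expand $\psi(-q^3)$ using \eqref{psi} (with $q\mapsto -q^3$) and $\chi(q)$ using \eqref{chiq}, convert everything to the $f_n$ notation, and check that the two products coincide. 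Since $\psi(-q^3)$ carries the modulus $6$ and $\chi(q)=(-q;q^2)_\infty$ supplies exactly the ``odd-residue'' factors $(-q;q^6)_\infty(-q^5;q^6)_\infty$ missing from $\psi(-q^3)$, the factors should match term by term after sorting residues modulo $6$.

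Second, for \eqref{f(q,q^2)}, the same strategy applies to $f(q,q^2)=\sum_{k} q^{k(k+1)/2}\,q^{2\cdot k(k-1)/2}$, whose exponent simplifies so that $ab=q^3$; Jacobi's identity then yields
\begin{align*}
f(q,q^2)=(-q;q^3)_\infty(-q^2;q^3)_\infty(q^3;q^3)_\infty.
\end{align*}
On the right, I would write $\varphi(-q^3)$ via \eqref{varphi(-q)} (with $q\mapsto q^3$) and divide by $\chi(-q)=(q;q^2)_\infty$ from \eqref{chi-q}, then again reconcile the two products by grouping factors according to their residue modulo $3$ and $6$. I expect the main obstacle to be purely bookkeeping: the quotient $\varphi(-q^3)/\chi(-q)$ must be simplified so that the sign pattern and the moduli align with the Jacobi expansion, which requires carefully tracking which $(q^a;q^b)_\infty$ factors cancel. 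Rather than verifying the product forms from scratch, one may instead cite the standard specializations of $f(a,b)$ in Berndt's notebooks (as the statement already does), so the ``proof'' reduces to confirming that the product representations quoted for $\psi$, $\varphi$, $\chi$ in this section are consistent with those references; accordingly I would present the argument as a short direct computation with Jacobi's triple product as the single workhorse.
```
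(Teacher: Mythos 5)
Your proposal is correct, but note that the paper itself offers no proof of Lemma \ref{lemma:lem1}: it is quoted with a citation to Berndt \cite{bcb3} (pp.\ 51 and 350), which is exactly the fallback your last sentence mentions. Your main argument, by contrast, is a genuine self-contained derivation, and the computations check out. For \eqref{f(q,q^5)}, the Jacobi triple product gives $f(q,q^5)=(-q;q^6)_\infty(-q^5;q^6)_\infty(q^6;q^6)_\infty$, while
$\psi(-q^3)\chi(q)=\frac{(q^6;q^6)_\infty}{(-q^3;q^6)_\infty}\,(-q;q^6)_\infty(-q^3;q^6)_\infty(-q^5;q^6)_\infty$,
and the two $(-q^3;q^6)_\infty$ factors cancel; this is slightly different from your phrasing that $\chi(q)$ ``supplies the missing odd-residue factors'' --- it supplies all three odd residues modulo $6$, one of which cancels the \emph{denominator} of $\psi(-q^3)$, but that is an imprecision of wording, not a gap. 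For \eqref{f(q,q^2)}, both sides equal $f_2f_3^2/(f_1f_6)$: indeed $(-q;q^3)_\infty(-q^2;q^3)_\infty=(-q;q)_\infty/(-q^3;q^3)_\infty=(f_2/f_1)(f_3/f_6)$, so the Jacobi product is $f_2f_3^2/(f_1f_6)$, while $\varphi(-q^3)/\chi(-q)=(f_3^2/f_6)(f_2/f_1)$ by \eqref{varphi(-q)} and \eqref{chi-q}. What your route buys is self-containedness: the lemma reduces to the same Jacobi triple product already invoked in Section \ref{preliminaries} to justify the product forms \eqref{varphi}--\eqref{psi}. What the paper's citation buys is brevity, these being standard entries in Ramanujan's notebooks; either treatment is acceptable, and yours could be inserted as a short remark without disturbing anything else in the paper.
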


At this point we start presenting some of the important $2, 3$- dissection formulae. We discuss the $2$-dissection formulae in \eqref{lem-2-dis}.
\begin{lemma}\cite[Lemma 2]{matching}\label{lem-2-dis}
	We have
	\begin{align}
	f_1^4&=\dfrac{f_4^{10}}{f_2^2f_8^4}-4q\dfrac{f_2^2f_8^4}{f_4^2}\label{f_1^4},\\
	\frac{1}{f_{1}^4} &= \frac{f_{4}^{14}}{f_{2}^{14} f_{8}^4} + 4 q \frac{f_{4}^2 f_{8}^4}{f_{2}^{10}},\label{dis1byf1^4}
	\end{align}
\end{lemma}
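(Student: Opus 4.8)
The plan is to reduce both formulae to a single $2$-dissection of $\varphi(q)^2$ and then carry out the eta-quotient bookkeeping. Starting from the series definition $\varphi(q)=f(q,q)=\sum_{k}q^{k^2}$, I would split the summation index by parity: the even indices $k=2m$ contribute $\sum_m q^{4m^2}=\varphi(q^4)$, while the odd indices $k=2m+1$ contribute $\sum_m q^{(2m+1)^2}=2q\psi(q^8)$. This gives the elementary dissection
\begin{align*}
\varphi(q)=\varphi(q^4)+2q\,\psi(q^8),
\end{align*}
and replacing $q$ by $-q$ yields $\varphi(-q)=\varphi(q^4)-2q\,\psi(q^8)$, so the product telescopes into
\begin{align*}
\varphi(q)\varphi(-q)=\varphi(q^4)^2-4q^2\psi(q^8)^2.
\end{align*}

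On the other hand, combining \eqref{varphi} and \eqref{varphi(-q)} in product form gives $\varphi(q)\varphi(-q)=\frac{f_2^5}{f_1^2f_4^2}\cdot\frac{f_1^2}{f_2}=\frac{f_2^4}{f_4^2}=\varphi(-q^2)^2$. Equating the two expressions for $\varphi(q)\varphi(-q)$ and then replacing $q^2$ by $q$ produces the master identity $\varphi(-q)^2=\varphi(q^2)^2-4q\,\psi(q^4)^2$, and substituting $q\to -q$ gives its companion $\varphi(q)^2=\varphi(q^2)^2+4q\,\psi(q^4)^2$. For \eqref{f_1^4} I would then use $f_1^4=f_2^2\varphi(-q)^2$ (from \eqref{varphi(-q)}), insert the master identity, and convert to the $f_n$ notation via $\varphi(q^2)=f_4^5/(f_2^2f_8^2)$ and $\psi(q^4)=f_8^2/f_4$ (obtained by replacing $q$ by $q^2$ and $q^4$ in \eqref{varphi} and \eqref{psi}); the two terms simplify directly to $f_4^{10}/(f_2^2f_8^4)$ and $-4q\,f_2^2f_8^4/f_4^2$.

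For \eqref{dis1byf1^4} the idea is to rationalize the reciprocal rather than to expand a geometric series. Writing $1/f_1^4=1/(f_2^2\varphi(-q)^2)$, I would multiply numerator and denominator by $\varphi(q)^2$, using $(\varphi(q)\varphi(-q))^2=\varphi(-q^2)^4$ to collapse the denominator into the monomial quotient $\varphi(-q^2)^4=f_2^8/f_4^4$. The numerator is then the already-dissected $\varphi(q)^2=\varphi(q^2)^2+4q\,\psi(q^4)^2$, and after substituting the eta-quotients above and multiplying through by $1/f_2^2$ one lands on $f_4^{14}/(f_2^{14}f_8^4)+4q\,f_4^2f_8^4/f_2^{10}$, as required.

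I expect the main obstacle to be the second part: directly inverting $\varphi(-q)^2$ is intractable, and the whole argument hinges on the observation that multiplying by $\varphi(q)^2$ turns the denominator into $\varphi(-q^2)^4$, which is a clean single eta-quotient. Once that trick is in place, everything reduces to parity-splitting and routine simplification of $f_n$-quotients, so the remaining effort is careful bookkeeping of exponents rather than any genuine difficulty.
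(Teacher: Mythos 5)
Your proof is correct, and it is worth noting that the paper itself offers no argument for this lemma at all---it is simply cited from Baruah--Das \cite[Lemma 2]{matching}---so you have supplied a self-contained proof where the paper has none. I checked the details: the parity split of $\varphi(q)=\sum_k q^{k^2}$ does give $\varphi(q)=\varphi(q^4)+2q\psi(q^8)$ (the odd indices $k=2m+1$, $m\in\mathbb{Z}$, yield $q\sum_m q^{4m(m+1)}=2q\psi(q^8)$ after pairing $m$ with $-m-1$); the product-form computation $\varphi(q)\varphi(-q)=f_2^4/f_4^2=\varphi(-q^2)^2$ is right; and the resulting master identities $\varphi(\mp q)^2=\varphi(q^2)^2\mp 4q\psi(q^4)^2$, combined with $f_1^4=f_2^2\varphi(-q)^2$, $\varphi(q^2)^2=f_4^{10}/(f_2^4f_8^4)$ and $\psi(q^4)^2=f_8^4/f_4^2$, reproduce \eqref{f_1^4} exactly. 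For \eqref{dis1byf1^4}, the rationalization $1/f_1^4=\varphi(q)^2/(f_2^2\varphi(-q^2)^4)=\varphi(q)^2 f_4^4/f_2^{10}$ is the correct device, and multiplying the dissected $\varphi(q)^2$ by $f_4^4/f_2^{10}$ gives $f_4^{14}/(f_2^{14}f_8^4)+4qf_4^2f_8^4/f_2^{10}$ as claimed (your phrase ``multiplying through by $1/f_2^2$'' understates the factor, which is $f_4^4/f_2^{10}$, but your final expressions are right). This is essentially the classical derivation one finds in Hirschhorn's \emph{The Power of $q$} and underlying the cited source, so your route matches the standard proof of the quoted result; the only observation I would add is that \eqref{dis1byf1^4} can alternatively be obtained from \eqref{f_1^4} by the involution $q\mapsto -q$ together with $f_1(-q)=f_2^3/(f_1f_4)$, which trades your rationalization trick for a different piece of bookkeeping.
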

The following identities are due to \cite[Lemma 2.1]{Guadalupe1}.
\begin{align}
\dfrac{f_3}{f_1^{3}}&= \frac{f_{4}^{6}f_{6}^{3} }{f_{2}^{9}f_{12}^{2}} +3q \frac{f_{4}^{2}f_{6} f_{12}^{2} }{f_{2}^{7}} \label{eq10}\\
\dfrac{f_3^{3}}{f_1}&= \frac{f_{4}^{3}f_{6}^{2} }{f_{2}^{2}f_{12}} +q \frac{ f_{12}^{3} }{f_{4}} \label{eq33}
\end{align}
Here we state some of the well-known 3-dissection formulae.
\begin{lemma}\cite[p. 49]{bcb3}
	We have
	\begin{align}
	\varphi(q)&=\varphi(q^9)+2qf(q^3,q^{15}),\label{3-dissec-varphi}\\
	\psi(q)&=f(q^3,q^6)+q\psi(q^9).\label{3-dissec-psi}
	\end{align}
\end{lemma}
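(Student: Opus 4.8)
The plan is to work directly from the series definition of Ramanujan's theta function and to split the index of summation into residue classes modulo $3$. First I would record the two classical specializations that follow immediately from $f(a,b)=\sum_{k=-\infty}^{\infty} a^{k(k+1)/2}b^{k(k-1)/2}$: putting $a=b=q$ collapses the exponent to $k^2$, so $\varphi(q)=\sum_{k=-\infty}^{\infty} q^{k^2}$, while putting $a=q$, $b=q^3$ gives exponent $2k^2-k$, so $\psi(q)=\sum_{k=-\infty}^{\infty} q^{2k^2-k}$. From the same definition I would also expand the two theta functions appearing on the right-hand sides, obtaining $f(q^3,q^{15})=\sum_{k} q^{9k^2-6k}$ and $f(q^3,q^6)=\sum_{k} q^{3k(3k-1)/2}$.

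For \eqref{3-dissec-varphi} I would partition $\mathbb{Z}$ according to $k\bmod 3$. Because $k^2\equiv 0\pmod 3$ exactly when $3\mid k$ and $k^2\equiv 1\pmod 3$ otherwise, the terms with $k=3m$ assemble into $\sum_m q^{9m^2}=\varphi(q^9)$, accounting for all exponents divisible by $3$. The two remaining families $k=3m+1$ and $k=3m-1$ produce $q^{9m^2\pm 6m+1}$; replacing $m$ by $-m$ in one family shows the two sums coincide, so their total is $2q\sum_m q^{9m^2-6m}=2q\,f(q^3,q^{15})$, which is exactly the claimed identity.

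For \eqref{3-dissec-psi} I would again split $\sum_{k} q^{2k^2-k}$ by $k\bmod 3$, observing that $2k^2-k\equiv 1\pmod 3$ forces $k\equiv 1\pmod 3$. Writing $k=3m+1$ gives exponent $18m^2+9m+1=9(2m^2+m)+1$, and since $\sum_m q^{2m^2+m}=\psi(q)$ after the reflection $m\mapsto -m$, this single family is precisely $q\,\psi(q^9)$. The complementary families $k\equiv 0$ and $k\equiv 2\pmod 3$ contribute exponents $3m(6m-1)$ and $3(2m+1)(3m+2)$ respectively, all divisible by $3$, and the plan is to verify that together they reproduce each exponent $3k(3k-1)/2$ of $f(q^3,q^6)$ exactly once as $k$ runs over $\mathbb{Z}$.

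The main obstacle is this last bookkeeping step: confirming that the two residue families $k\equiv 0$ and $k\equiv 2\pmod 3$ interleave to give the bilateral series $f(q^3,q^6)$ with neither omissions nor repetitions. The cleanest way to settle it is to make the index substitutions explicit, matching the $k=3m$ and $k=3m+2$ terms of the $\psi$-sum with suitable linear reparametrizations of the summation index in $f(q^3,q^6)=\sum_k q^{3k(3k-1)/2}$; alternatively, one may invoke the general $n$-dissection lemma for theta functions in \cite{bcb3}, of which both identities are the $n=3$ instance, thereby bypassing the case-by-case exponent matching entirely.
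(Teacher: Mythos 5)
Your argument is correct, and it is worth noting that the paper itself offers no proof of this lemma at all: it is quoted directly from Berndt (\emph{Ramanujan's Notebooks, Part III}, p.~49), so your series-dissection argument is a genuine, self-contained proof rather than a parallel to anything in the text. Your treatment of \eqref{3-dissec-varphi} is complete. For \eqref{3-dissec-psi}, the bookkeeping step you flag as the main obstacle closes exactly as you predict: split $f(q^3,q^6)=\sum_{j}q^{3j(3j-1)/2}$ by the parity of $j$. Taking $j=2m$ gives exponent $3m(6m-1)$, which is precisely your $k=3m$ family, while taking $j=-(2m+1)$ gives exponent $3(-2m-1)(-3m-2)/1=3(2m+1)(3m+2)$, precisely your $k=3m+2$ family. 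Since $m\mapsto 2m$ onto the even integers and $m\mapsto -(2m+1)$ onto the odd integers are bijections from $\mathbb{Z}$, every exponent of $f(q^3,q^6)$ is accounted for exactly once, with neither omissions nor repetitions. Your suggested alternative---invoking the general $N$-dissection of $f(a,b)$ (Entry 31 on p.~48 of Berndt, of which both identities are the $N=3$ case)---is in fact how the cited source obtains them, so either route is legitimate; the direct mod-$3$ split has the advantage of requiring nothing beyond the series definition, at the cost of the explicit reindexing above.
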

Following up on the above $3$- dissection identities we present some $3$-dissections identities which were given by Hirschhorn \cite[(14.3.2), (14.3.3)]{hirscb}: 
\begin{align}
\dfrac{f_1^2}{f_2} &= \dfrac{f_9^2}{f_{18}}-2q\dfrac{f_3f_{18}^2}{f_6f_9},\label{eq31}\\
\dfrac{f_2^2}{f_1}& = \dfrac{f_6f_9^2}{f_3f_{18}}+q\dfrac{f_{18}^2}{f_9}. \label{eq18}
\end{align}
Toh \cite[Lemma 2.1, (2.1c)]{toh} came up with the following identity  
\begin{align}
\dfrac{f_2}{f_1f_4} &= \dfrac{f_{18}^9}{f_3^2f_9^3f_{12}^2f_{36}^3}+q\dfrac{f_6^2f_{18}^3}{f_3^3f_{12}^3}+q^2\dfrac{f_6^4f_9^3f_{36}^3}{f_3^4f_{12}^4f_{18}^3}.\label{eq32}
\end{align}
which represents a $3$-dissection of the generating function for the number of partitions of $n$ with distinct odd parts.
The following $3$-dissection of the generating function which counts the number of partitions of $n$ with distinct even parts is due to Andrews, Hirschhorn,
and Sellers \cite[Theorem 3.1]{ahs}
\begin{align}\label{eqAHS}
\dfrac{f_4}{f_1} = \dfrac{f_{12}f_{18}^4}{f_3^3f_{36}^2}+q\dfrac{f_6^2f_9^3f_{36}}{f_3^4f_{18}^2}+2q^2\dfrac{f_6f_{18}f_{36}}{f_3^3}
\end{align}
which follows from the binomial theorem.
\begin{lemma}\cite[Lemma 2.2]{Guadalupe1}\label{lem22}The following $3$-dissection holds:
	\begin{align}
	\dfrac{f_2}{f_1^2} &= \dfrac{f_6^4f_9^6}{f_3^8f_{18}^3}+2q\dfrac{f_6^3f_9^3}{f_3^7}+4q^2\dfrac{f_6^2f_{18}^3}{f_3^6}.\label{eq9}
	\end{align}
\end{lemma}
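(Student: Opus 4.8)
The plan is to exploit the observation that, by \eqref{varphi(-q)}, the series $\tfrac{f_2}{f_1^2}$ is nothing other than $1/\varphi(-q)$, and to produce the $3$-dissection by rationalising the denominator against its conjugates under $q \mapsto \omega q$, where $\omega = e^{2\pi i/3}$.

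First I would invoke the $3$-dissection \eqref{eq31}, which already splits $\varphi(-q)=f_1^2/f_2$ into its residue classes modulo $3$: setting
$$P := \frac{f_9^2}{f_{18}}, \qquad R := \frac{f_3 f_{18}^2}{f_6 f_9},$$
both of which are power series in $q^3$, equation \eqref{eq31} reads $\varphi(-q) = P - 2qR$. (If one prefers not to cite \eqref{eq31}, the same splitting follows from the two-term $3$-dissection \eqref{3-dissec-varphi} of $\varphi$ after $q\mapsto -q$, together with the Jacobi-triple-product evaluation $f(-q^3,-q^{15}) = f_3 f_{18}^2/(f_6 f_9)$.)

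Since $P$ and $R$ are invariant under $q \mapsto \omega q$, the three conjugates of $P - 2qR$ are $P - 2\omega^{\,j} q R$ for $j=0,1,2$, and their product telescopes to the series $D := P^3 - 8q^3 R^3$ in $q^3$. Multiplying numerator and denominator of $1/(P - 2qR)$ by the two non-trivial conjugates, and using $\omega + \omega^2 = -1$ and $\omega^3 = 1$ to expand $(P-2\omega qR)(P-2\omega^2 qR)$, gives
$$\frac{f_2}{f_1^2} = \frac{1}{P - 2qR} = \frac{P^2 + 2qPR + 4q^2R^2}{D}.$$
Because $D$ is a series in $q^3$, the three summands in the numerator already lie in distinct residue classes modulo $3$, so reading them off produces the three terms of the claimed dissection \eqref{eq9}.

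The one genuinely non-formal step, and the main obstacle, is to identify the common denominator explicitly as the eta-quotient
$$D = P^3 - 8q^3 R^3 = \frac{f_3^8\, f_{18}}{f_6^4\, f_9^2}.$$
Granting this, each coefficient simplifies at once: substituting $P = f_9^2/f_{18}$ and $R = f_3 f_{18}^2/(f_6 f_9)$ yields $P^2/D = f_6^4 f_9^6/(f_3^8 f_{18}^3)$, $2PR/D = 2 f_6^3 f_9^3/f_3^7$, and $4R^2/D = 4 f_6^2 f_{18}^3/f_3^6$, which are precisely the coefficients of $q^0,q^1,q^2$ in \eqref{eq9}. I expect to establish the displayed denominator identity either by recognising $D$ as the orbit product $\varphi(-q)\varphi(-\omega q)\varphi(-\omega^2 q)$ and applying a triplication formula for $\varphi$, or, failing a slick route, by clearing denominators to an honest one-variable theta-function identity (after the substitution $q^3 \mapsto q$) and verifying it through the standard dissection identities recorded above; this reduction to a single eta-quotient identity is where the real computational work lies.
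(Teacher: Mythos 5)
The first thing to say is that the paper contains no proof of this lemma for your argument to be compared against: the statement is imported directly from \cite[Lemma 2.2]{Guadalupe1}, with the citation standing in for a proof. So your proposal must be judged on its own merits, and on those merits it is correct --- indeed it is the classical rationalization argument by which $3$-dissections of reciprocals such as $1/\varphi(-q)$ are usually derived (this is the technique used systematically in Hirschhorn's book \cite{hirscb}).

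All of your formal steps check out: \eqref{eq31} is exactly $\varphi(-q)=P-2qR$ with $P=f_9^2/f_{18}$ and $R=f_3f_{18}^2/(f_6f_9)$, both series in $q^3$; the conjugates are $\varphi(-\omega^j q)=P-2\omega^j qR$; the product of the two nontrivial conjugates is $P^2+2qPR+4q^2R^2$; and since $D=P^3-8q^3R^3$ is a series in $q^3$, the three resulting terms lie in distinct residue classes modulo $3$, so what you obtain is genuinely the $3$-dissection. The only step you leave open --- the evaluation of $D$, which you call the ``real computational work'' --- is far more routine than you suggest, and your first proposed route closes it in two lines without any triplication formula or brute-force identity verification. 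Since $D=\varphi(-q)\varphi(-\omega q)\varphi(-\omega^2 q)$ and $\varphi(-q)=f_1^2/f_2$ by \eqref{varphi(-q)}, it suffices to compute the orbit products of $f_1$ and $f_2$, which follow by sorting factors according to the residue of the index modulo $3$: writing $f_k(\zeta q):=\prod_{n\ge1}\bigl(1-(\zeta q)^{kn}\bigr)$, one has
\begin{align*}
\prod_{j=0}^{2}f_1(\omega^j q)&=\prod_{3\mid n}(1-q^{n})^{3}\prod_{3\nmid n}(1-q^{3n})=f_3^3\cdot\frac{f_3}{f_9}=\frac{f_3^4}{f_9},\\
\prod_{j=0}^{2}f_2(\omega^j q)&=\prod_{3\mid n}(1-q^{2n})^{3}\prod_{3\nmid n}(1-q^{6n})=f_6^3\cdot\frac{f_6}{f_{18}}=\frac{f_6^4}{f_{18}},
\end{align*}
because for $3\nmid n$ the three factors $(1-\omega^{jn}q^{n})$, $j=0,1,2$, multiply to $1-q^{3n}$. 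Hence
\begin{align*}
D=\frac{\bigl(f_3^4/f_9\bigr)^2}{f_6^4/f_{18}}=\frac{f_3^8 f_{18}}{f_6^4 f_9^2},
\end{align*}
which is precisely the eta-quotient you needed. Your verifications of $P^2/D$, $2PR/D$ and $4R^2/D$ against the three terms of \eqref{eq9} are correct as stated, so with the computation above your proof is complete. (Your parenthetical alternative for obtaining \eqref{eq31} from \eqref{3-dissec-varphi} via $f(-q^3,-q^{15})=f_3f_{18}^2/(f_6f_9)$ is also correct, but unnecessary given that the paper records \eqref{eq31} explicitly.)
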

Throughout the paper we will make use of the congruence $f_k^{p}\equiv f_{pk} \pmod{p}$ for $\text{ prime } p \text{ and  }k\geq 1$.

\section{Congruences for $\overline{R_{2,3}}(n)$ }\label{(2,3)_result}
\begin{theorem}
	For all $n\geq 0$, we have
	\begin{align}
	\overline{R_{2,3}}(9n+6)&\equiv 0 \pmod 6.\label{cong-1}
	\end{align}
\end{theorem}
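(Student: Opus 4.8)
The plan is to start from the generating function obtained by setting $(\ell,\mu)=(2,3)$ in \eqref{eq2}, namely
\begin{equation*}
\sum_{n\geq 0}\overline{R_{2,3}}(n)q^n=\frac{f_2^3f_3^2f_{12}}{f_1^2f_4f_6^3},
\end{equation*}
and to establish the congruence separately modulo $2$ and modulo $3$, recombining by the Chinese Remainder Theorem since $6=2\cdot 3$ with $\gcd(2,3)=1$.

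For the modulus $2$ I would simply apply $f_k^2\equiv f_{2k}\pmod 2$: writing $f_2^3\equiv f_2f_4$, $f_6^3\equiv f_6f_{12}$, $f_1^2\equiv f_2$ and $f_3^2\equiv f_6$, the quotient collapses to $1$ modulo $2$. Hence $\overline{R_{2,3}}(n)\equiv 0\pmod 2$ for every $n\geq 1$, and in particular for $n=9m+6$. This part is routine.

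For the modulus $3$ I would first reduce using $f_k^3\equiv f_{3k}\pmod 3$. From $f_3^2\equiv f_1^6$ one gets $f_3^2/f_1^2\equiv f_1^4\equiv f_1f_3$, and combining with $1/f_4\equiv f_4^2/f_{12}$, $f_2^3\equiv f_6$, $f_6^3\equiv f_{18}$ yields
\begin{equation*}
\sum_{n\geq 0}\overline{R_{2,3}}(n)q^n\equiv\frac{f_1f_3f_4^2f_6}{f_{18}}\pmod 3 .
\end{equation*}
Here $f_3f_6/f_{18}$ is a series in $q^3$, while $f_1f_4^2=\varphi(-q)\psi(q)\psi(q^2)$ by the identity $\varphi(-q)\psi(q)=f_1f_2$ together with \eqref{psi}. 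Since $9m+6\equiv 0\pmod 3$, I would $3$-dissect this theta product using the dissection \eqref{eq31} of $\varphi(-q)=f_1^2/f_2$ and the dissection \eqref{3-dissec-psi} of $\psi$, retain only the terms whose $q$-exponent is divisible by $3$, and replace $q^3$ by $q$. Evaluating the resulting theta values via \eqref{f(q,q^2)} should collapse the survivors to the compact form
\begin{equation*}
\sum_{n\geq 0}\overline{R_{2,3}}(3n)q^n\equiv\frac{f_2f_3^4f_4}{f_6f_{12}}-q\,\frac{f_1f_2f_3f_{12}^2}{f_6}\pmod 3 .
\end{equation*}

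Finally, because $9m+6=3(3m+2)$, I would apply a second $3$-dissection to this two-term expression and isolate the part whose exponent is $\equiv 2\pmod 3$. In the first term this requires the residue-$2$ component of $f_2f_4=\varphi(-q^2)\psi(q^2)$, and in the second the residue-$1$ component of $f_1f_2=\varphi(-q)\psi(q)$; both arise from the same dissections \eqref{eq31} and \eqref{3-dissec-psi} together with the theta evaluation $f(-q,-q^5)=\psi(q^3)\chi(-q)$ obtained from \eqref{f(q,q^5)} under $q\mapsto -q$. After simplification the two contributions turn out to be equal modulo $3$, the verification reducing to the elementary congruence $f_{12}^3\equiv f_{36}\pmod 3$, so the $\equiv 2\pmod 3$ part vanishes and $\overline{R_{2,3}}(9m+6)\equiv 0\pmod 3$. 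I expect the main obstacle to be exactly this second dissection: keeping the several theta products organised and recognising the final cancellation, rather than the modulo-$2$ step or the first dissection, which are comparatively mechanical.
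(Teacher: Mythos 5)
Your proposal is correct, and I verified its two nontrivial claims: the intermediate congruence $\sum_{n\geq 0}\overline{R_{2,3}}(3n)q^n\equiv\frac{f_2f_3^4f_4}{f_6f_{12}}-q\,\frac{f_1f_2f_3f_{12}^2}{f_6}\pmod 3$ does hold, and the final cancellation works exactly as you describe (the two residue-$2$ contributions are $-q^2\frac{f_3^4f_{18}f_{36}}{f_6f_{12}}$ and $+q^2\frac{f_3f_9f_{12}^2f_{18}}{f_6}$, which cancel using $f_3^3\equiv f_9$ as well as $f_{12}^3\equiv f_{36}$ modulo $3$ --- you should cite both congruences, not just the latter). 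The overall architecture coincides with the paper's: trivial modulo $2$, two successive $3$-dissections modulo $3$, then combine. The execution of the modulo-$3$ part differs, however. The paper reduces the generating function to $\frac{f_3^2}{f_6^2}\cdot\frac{f_4^2}{f_1^2}$ and performs the first extraction by squaring the Andrews--Hirschhorn--Sellers $3$-dissection \eqref{eqAHS} of $f_4/f_1$, then handles the second extraction with the eta-quotient dissections \eqref{eq18} and \eqref{eq31} alongside Lemma \ref{lemma:lem1}; you instead reduce to $\frac{f_1f_3f_4^2f_6}{f_{18}}$, recognise $f_1f_4^2=\varphi(-q)\psi(q)\psi(q^2)$, and carry out both extractions using only the classical dissections \eqref{3-dissec-varphi}, \eqref{3-dissec-psi} and the theta evaluations of Lemma \ref{lemma:lem1}. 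Your route is more uniform and self-contained (it never needs \eqref{eqAHS} or \eqref{eq18}), at the cost of tracking a three-factor theta product whose dissection produces three surviving cross terms rather than the paper's two; both proofs hinge on the same endgame, namely that $f(q,q^2)f(-q,-q^5)$-type products collapse to $\varphi(-q^3)\psi(q^3)=f_3f_6$-type products, forcing the coefficient $1-2\equiv -3+2\cdot 0$ pattern that kills the residue class modulo $3$.
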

\begin{proof}[Proof of Equation \eqref{cong-1}]
	Putting $\ell=2, \mu=3$ in \eqref{eq2}, we have
	\begin{align}\label{p1eq1}
	\sum_{n\geq 0}\overline{R_{2,3}}(n)q^n&=\frac{f_{2}^{3}f_{3}^{2}f_{12}}{f_1^2f_{4}f_{6}^{3}} \notag \\
	&\equiv \frac{f_{3}^{2}}{f_{6}^{2}}\frac{f_{4}^{2}}{f_{1}^{2}} \pmod3.
	\end{align}
	The last congruence holds because $f_{12}\equiv f_{4}^{3}$ and $f_{2}^{3}\equiv f_{6}$ in modulo $3$.
	Now, putting the value of $\frac{f_{4}}{f_{1}}$ from \eqref{eqAHS} in \eqref{p1eq1} and  extracting the terms $q^{3n}$ and then replacing $q^3$ by $q$ we get 
	\begin{align}\label{p1eq2}
	\sum_{n\geq 0}\overline{R_{2,3}}(3n)q^n&\equiv \frac{f_{1}^{2}}{f_{2}^{2}} \bigg(\frac{f_{4}^{2} f_{6}^{8}}{f_{1}^{6} f_{12}^{4}}+ 4 q \frac{f_{2}^3 f_{3}^3 f_{12}^2}{f_{1}^{7} f_{6}} \bigg)\notag \\
	&\equiv  \frac{f_{4}^{2} f_{6}^{8}}{f_{2}^{2}f_{1}^{4} f_{12}^{4}} +4q \frac{f_{2} f_{3}^{3} f_{12}^2}{f_{1}^{5} f_{6}}\notag \\
	&\equiv  \frac{ f_{6}^{8}}{ f_{12}^{4}}  \frac{f_{4}^{2}}{f_{2}}\frac{f_{1}^{2}}{f_{2 } f_{3}^{2}} + 4q \frac{ f_{3} f_{12}^{2} f_{1} f_{2}}{ f_{6}} \pmod3 .
	\end{align}
	The last congruence holds because of $f_{1}^{6} \equiv f_{3}^{2}$. 
	Replacing  $q$ by $q^2$ in \eqref{eq18}
	\begin{align}\label{p1eq3}
	\dfrac{f_4^2}{f_2} = \dfrac{f_{12}f_{18}^2}{f_{6}f_{36}}+q^2 \dfrac{f_{36}^2}{f_{18}}.
	\end{align}
	With the help of \eqref{p1eq3}, \eqref{eq31}, we see the coefficients of the term $q^{3n+2}$ of $\frac{ f_{6}^{8}}{ f_{12}^{4}}  \frac{f_{4}^{2}}{f_{2}}\frac{f_{1}^{2}}{f_{2 } f_{3}^{2}}$ is
	\begin{align}\label{p1eq4}
	\frac{ f_{6}^{8}}{ f_{12}^{4}}  \frac{f_{36}^{2}}{f_{18}}\frac{f_{9}^{2}}{f_{18} f_{3}^{2}} \equiv f_{6}^{2} f_{12}^{2}f_{9}f_{3} \pmod3.
	\end{align}
	In the last congruence, we use the fact $ f_{18} \equiv f_{6}^{3}, f_{36}\equiv f_{12}^{3}, f_{9}\equiv f_{3}^{3}$. 
	With the help of \eqref{varphi psi}, \eqref{3-dissec-psi}, and \eqref{3-dissec-varphi} we see the coefficients of the term $q^{3n+2}$ of
	$4q \frac{ f_{3} f_{12}^{2} f_{1} f_{2}}{ f_{6}} $ is 
	\begin{align}\label{p1eq5}
	&4\frac{ f_{3} f_{12}^{2} }{ f_{6}}\big(\varphi(-q^9) \psi(q^{9})-2 f(-q^3, -q^{15}) f(q^3, q^6)\big) \notag \\
	&\equiv 4\frac{ f_{3} f_{12}^{2} }{ f_{6}}\big(\varphi(-q^9) \psi(q^{9})-2 \varphi(-q^9) \psi(q^{9})\big) \notag \\
	&\equiv -4 f_{6}^{2} f_{12}^{2}f_{9}f_{3} \pmod3.
	\end{align}
	In the first congruence, we use \eqref{f(q,q^5)} and \eqref{f(q,q^2)}. In the last congruenc, we use the fact that $ \varphi(-q^9) \psi(q^{9})= f_{9} f_{18}$, $f_{18}\equiv f_{6}^{3}$.
	Combining \eqref{p1eq2}, \eqref{p1eq4}, \eqref{p1eq5} together we get 
	\begin{align}\label{p1eq6}
	\sum_{n\geq 0}\overline{R_{2,3}}(9n+6)q^n&=0 \pmod3.
	\end{align}
	Again, \begin{align*}
	\sum_{n\geq 0}\overline{R_{2,3}}(n)q^n&=\frac{f_{2}^{3}f_{3}^{2}f_{12}}{f_1^2f_{4}f_{6}^{3}} \notag \\
	&\equiv 1 \pmod2
	\end{align*}
	because of $ f_3^2 \equiv f_6, f_6^2\equiv f_{12} $ in modulo $2.$
	As a result,  \begin{align}\label{p1eq7}
	\sum_{n\geq 0}\overline{R_{2,3}}(9n+6)q^n \equiv 0 \pmod2
	\end{align}
	We complete the proof because of \eqref{p1eq6} and \eqref{p1eq7}.
\end{proof}
\begin{remark}
	The paper \cite{AMS} did provide a proof of the above theorem but they made use of Radu's algorithm \cite{Radu, Radu2} due to Smoot \cite{Smoot} while we give an elementary proof. It was also discussed in this \cite{Nadji2} paper that there's a recent paper where Nadji and Ahmia have some possible results for this case but since there are no published version available of this work as per our knowledge, Theorem \ref{cong-1} was not proved using only theta function identities. 
\end{remark}
\section{Congruences for $\overline{R_{4,3}}(n)$ }\label{(4,3)-result}
\begin{theorem}\label{thm:rd}
	For all $n\geq 0$, we have
	\begin{align}
	\overline{R_{4,3}}(12n + 7)&\equiv 0 \pmod 8,\label{cong-13}\\
	\overline{R_{4,3}}(12n + 11)&\equiv 0 \pmod 8.\label{cong-14}
	\end{align}
\end{theorem}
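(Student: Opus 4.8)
The plan is to avoid dissections entirely and instead run the theta-function scheme for modulus $8$. First I would specialise \eqref{eq2} to $(\ell,\mu)=(4,3)$, obtaining $\sum_{n\ge 0}\overline{R_{4,3}}(n)q^n=\frac{f_2f_4^2f_3^2f_{24}}{f_1^2f_8f_6f_{12}^2}$, and then rewrite this compactly using \eqref{varphi(-q)} applied at $q,q^3,q^4,q^{12}$, i.e. $\varphi(-q^d)=f_d^2/f_{2d}$. A direct check gives the key simplification
\begin{equation*}
\sum_{n\ge 0}\overline{R_{4,3}}(n)q^n=\frac{\varphi(-q^3)\,\varphi(-q^4)}{\varphi(-q)\,\varphi(-q^{12})},
\end{equation*}
which is what makes the whole argument elementary.

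Next I would expand this modulo $8$. Writing $\varphi(-q^d)=1+2S_d$ with $S_d:=\sum_{m\ge1}(-1)^mq^{dm^2}$, and using the geometric-series reduction $\tfrac{1}{1+2S}\equiv 1-2S+4S^2\pmod 8$ (valid since $S$ has no constant term), I would obtain a closed expansion
\begin{equation*}
\sum_{n\ge0}\overline{R_{4,3}}(n)q^n\equiv 1+2\big(S_3+S_4-S_1-S_{12}\big)+4\,\Sigma\pmod 8,
\end{equation*}
where $\Sigma$ is the explicit combination of the pairwise products $S_aS_b$ together with $S_1^2$ and $S_{12}^2$ arising from multiplying out the four factors. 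No specific $2$-dissection is needed; this is an instance of the generic modulo-$8$ method.

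The heart of the proof is a residue analysis modulo $12$. Since the exponents in $S_d$ are $dm^2$ and squares lie only in $\{0,1,4,9\}\pmod{12}$, one checks that the exponents of $S_1,S_3,S_4,S_{12}$ never fall into the classes $7$ or $11\pmod{12}$, so the linear term $2(S_3+S_4-S_1-S_{12})$ contributes nothing on either progression. A short tabulation of the admissible residues of each product $S_aS_b$ shows that no product reaches the class $11\pmod{12}$ at all; hence the coefficient of $q^{12n+11}$ is $\equiv 0\pmod 8$, which proves \eqref{cong-14} outright. For the class $7\pmod{12}$ the same tabulation isolates exactly two surviving products, so the coefficient of $q^{12n+7}$ is congruent to $4\big([S_3S_4]_{12n+7}-[S_3S_1]_{12n+7}\big)\pmod 8$.

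The remaining and main obstacle is to show that these two coefficients agree modulo $2$, so that the factor $4$ yields a multiple of $8$. Modulo $2$ the signs are irrelevant, so it suffices to compare the representation counts $\#\{(a,b):3a^2+b^2=12n+7\}$ and $\#\{(a,c):3a^2+4c^2=12n+7\}$. Reducing $3a^2+b^2=12n+7$ modulo $4$ forces $b$ even, and the substitution $b=2c$ is a bijection onto the solutions of $3a^2+4c^2=12n+7$, the congruence conditions ($a$ odd, $3\nmid c$) being automatically consistent on both sides. Hence the two counts are equal, their difference is even, and $4\times(\text{even})\equiv0\pmod 8$ gives \eqref{cong-13}. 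I expect this parity bijection, together with the verification that the scaled squares avoid the residues $7,11\pmod{12}$, to carry the real content; everything else is bookkeeping inside the modulo-$8$ expansion.
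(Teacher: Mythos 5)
Your proposal is correct and follows essentially the same route as the paper's proof: the same rewriting $\sum_{n\ge0}\overline{R_{4,3}}(n)q^n=\varphi(-q^3)\varphi(-q^4)/\bigl(\varphi(-q)\varphi(-q^{12})\bigr)$, the same expansion modulo $8$, the same residue analysis modulo $12$ isolating exactly the two forms $3a^2+b^2$ and $3a^2+4c^2$ for the class $7$ (and nothing for the class $11$), and the same pairing of their representations. The only differences are technical rather than conceptual: you invert via $1/(1+2S)\equiv 1-2S+4S^2\pmod 8$ where the paper uses $1/\varphi(-q)=\varphi(q)\varphi(q^2)^{2}\varphi(q^{4})^{4}\cdots$, and your explicit mod-$4$ bijection $b=2c$ is a cleaner, fully rigorous version of the step the paper states only loosely as needing to ``count these two terms twice.''
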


\begin{proof}
	
	We can re-write the generating function of $(4, 3)$- regular overpartitions in the following way by using some theta function identities 
	\begin{align}\label{seq1}
	\sum_{n\geq 0}\overline{R_{4,3}}(n)q^n&= \frac{\varphi(-q^4)\varphi(-q^3)}{\varphi(-q)\varphi(-q^{12})} .
	\end{align}
	Since $\varphi(-q^{12})$ already consists of terms which are of the form $q^{12n}$, we need to only check the remaining product and look for terms of the form $q^{12n + r}, \text{ where } r \in \mathbb{Z}_{12}$ in order to prove our congruences. Now by using the identity $\frac{1}{\varphi(-q)} = \varphi(q)\varphi(q^2)^{2}\varphi(q^{4})^{4} \cdots$ and the fact that $\varphi(q^i)^j \equiv 1 \pmod{8} \text{ for all even } j \geq 4$ we can re-write equation \eqref{seq1} in the following manner
	\begin{align}\label{seq2}
	&\equiv\frac{ \varphi(q)\varphi(q^2)^{2}\varphi(-q^3)\varphi(-q^4)}{\varphi(-q^{12})} \pmod{8}
	\end{align}
	From here we ignore the denominator and only consider the numerator of \eqref{seq2} and try and search for terms of the form $q^{12n + r}$. We can write the numerator terms of \eqref{seq2} as
	\begin{align}\label{seq3}
	&\equiv \biggl(1 + 2 \sum_{n \geq 1}q^{n^2}\biggl) 
	\biggl(1 + 2 \sum_{n \geq 1}q^{2n^2}\biggl)^{2}
	\biggl(1 + 2 \sum_{n \geq 1}(-q^3)^{n^2}\biggl)
	\biggl(1 + 2 \sum_{n \geq 1}(-q^4)^{n^2}\biggl) \pmod{8}.
	\end{align}
	By further expanding the above sum in \eqref{seq3} and reducing it modulo $8$ we get this expression
	\begin{align}\label{seq4}
	&\equiv 1 + 2 \sum_{n \geq 1}q^{n^2} + 2 \sum_{n \geq 1}(-q^{4})^{n^2} + 2 \sum_{n \geq 1}(-q^{3})^{n^2}+ 4  \sum_{i, j \geq 1}(-1)^{j^2}q^{i^{2} + 4 j^{2}} + 4  \sum_{i, j \geq 1}(-1)^{j^2}q^{i^{2} +  3 j^{2}} \notag \\
	& + 4  \sum_{i, j \geq 1}(-1)^{i^2+j^2}q^{ 3 i^{2} + 4 j^{2}}+ 4 \sum_{n \geq 1}q^{2n^2} + 4\sum_{i,j \geq 1}q^{2(i^2+j^2)} \pmod{8}.
	\end{align}
	In order to prove Theorem \ref{thm:rd}, we need to look into the power series expansion from equation \eqref{seq4}. We can clearly see that the exponents of $q$ are either squares or $2, 3$- times a square number or the sum of two squares or something of the form $i^{2} + 3 j^{2}$ or of the form $3i^{2} + 4j^{2}$. Lets use the fact that the square of any integer can belong to either one of the following residue classes $\{0, 1, 4, 9\} \pmod{12}$ and clearly our $n$ doesn't fall in any of the quadratic residue classes modulo $12$. So the coefficients can only come from the terms $4  \sum_{i, j \geq 1}(-1)^{i^2+j^2}q^{ 3 i^{2} + 4 j^{2}}$ or $4  \sum_{i, j \geq 1}(-1)^{j^2}q^{i^{2} +  3 j^{2}}$ and for a particular exponent we need to count these two terms twice for extracting terms of the form $q^{12n + 7}$. While for extracting terms of the form $q^{12n + 11}$ we can clearly see even in these two expansions one will never come across terms of the form $q^{12n + 11}$ in the expansion. Hence we have shown that there are no exponents of $q$ of the form $12n +7$ or $12n + 11$, which concludes our proof.
	
\end{proof}

\begin{theorem}
	For all $n\geq 0$, we have
	\begin{align}
	\overline{R_{4,3}}(9n+3)&\equiv 0 \pmod 6,\label{cong-2}\\
	\overline{R_{4,3}}(12n+11)&\equiv 0 \pmod{9}.\label{cong-4}
	\end{align}
\end{theorem}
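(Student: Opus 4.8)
The plan is to split \eqref{cong-2} along the factorization $6=2\cdot 3$ and to treat \eqref{cong-4} separately. Throughout I use the two shapes of the generating function, namely $\sum_{n\geq 0}\overline{R_{4,3}}(n)q^n=\frac{f_2 f_4^2 f_3^2 f_{24}}{f_1^2 f_8 f_6 f_{12}^2}$ from \eqref{eq2}, together with the theta form \eqref{seq1}, $\frac{\varphi(-q^4)\varphi(-q^3)}{\varphi(-q)\varphi(-q^{12})}$, the latter being convenient because $\varphi(-q^3)$ and $\varphi(-q^{12})$ are already series in $q^3$. For the modulus $2$ in \eqref{cong-2} the argument is immediate: applying $f_k^2\equiv f_{2k}\pmod 2$ to $f_4^2,f_3^2,f_{12}^2$ and to $f_1^2$ collapses the quotient to $1$, so $\overline{R_{4,3}}(n)\equiv 0\pmod 2$ for every $n\geq 1$, in particular on the progression $9n+3$.

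For the modulus $3$ I first reduce with $f_{3k}\equiv f_k^3\pmod 3$ to get $\sum\overline{R_{4,3}}(n)q^n\equiv \frac{f_1^4 f_8^2}{f_2^2 f_4^4}\pmod 3$, and then apply the $2$-dissection \eqref{f_1^4} of $f_1^4$, which yields
\[
\sum_{n\geq 0}\overline{R_{4,3}}(n)q^n\equiv \frac{f_4^6}{f_2^4 f_8^2}+2q\frac{f_8^6}{f_4^6}\pmod 3.
\]
The first summand is a series in $q^2$ and the second is supported on exponents $\equiv 1\pmod 4$, so no exponent $\equiv 3\pmod 4$ occurs. Since $9n+3$ runs through the residues $\{0,3,6,9\}\pmod{12}$, the class $3$ is automatically excluded; for the class $9$ (which is $\equiv 1\pmod 4$) the relevant coefficients come from $2q\,f_8^6/f_4^6$, and after $q^4\mapsto q$ this factor reduces to $f_6^2/f_3^2\pmod 3$, supported on multiples of $3$, so those coefficients vanish.

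There remain the even classes $0$ and $6$, where $n$ is odd and the coefficient is read off the $q^2$-part $f_4^6/(f_2^4 f_8^2)$. Writing $n=2k+1$, this reduces to proving $[q^{9k+6}]\frac{f_2^6}{f_1^4 f_4^2}\equiv 0\pmod 3$ for all $k$. I would establish this by a $3$-dissection of $\frac{f_2^6}{f_1^4 f_4^2}\equiv \frac{f_6^2}{f_1^4 f_4^2}\pmod 3$, using \eqref{eq9} for the $1/f_1^2$-type factor together with $f_k^3\equiv f_{3k}$, extracting first the residue class $0\pmod 3$ and then, after $q^3\mapsto q$, the class $2\pmod 3$; the resulting coefficients are seen to vanish modulo $3$. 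Assembling the three cases proves \eqref{cong-2} modulo $3$, and with the modulo $2$ statement, modulo $6$.

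For \eqref{cong-4} the decisive observation is that the modulo $3$ reduction already obtained shows, via the \emph{exact} $2$-dissection, that $\frac{f_1^4 f_8^2}{f_2^2 f_4^4}=\frac{f_4^6}{f_2^4 f_8^2}-4q\frac{f_8^6}{f_4^6}$ carries no exponent $\equiv 3\pmod 4$; since $12n+11\equiv 3\pmod 4$, the coefficient of $q^{12n+11}$ in this mod-$3$ reduction is exactly $0$. Hence $\overline{R_{4,3}}(12n+11)\equiv 0\pmod 3$ for free, and writing the full generating function as $G_3+3H$ with $G_3=\frac{f_1^4 f_8^2}{f_2^2 f_4^4}$ (an integer series with $\mathrm{GF}\equiv G_3\pmod 3$), we get $[q^{12n+11}]\mathrm{GF}=3\,[q^{12n+11}]H$, so the task reduces to showing $[q^{12n+11}]H\equiv 0\pmod 3$, i.e.\ producing a \emph{second} factor of $3$. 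The plan is to compute the generating function modulo $9$ and extract the class $12n+11$ (equivalently $\equiv 2\pmod 3$ and $\equiv 3\pmod 4$) by first $3$-dissecting $\frac{f_2}{f_1^2}\varphi(-q^4)$ with \eqref{eq9} and the $3$-dissection of $\varphi(-q^4)$ derived from \eqref{3-dissec-varphi}, and then $2$-dissecting with \eqref{f_1^4} and \eqref{dis1byf1^4}. The main obstacle is exactly this promotion from modulus $3$ to modulus $9$: one must retain the order-$3$ corrections that $f_k^3\equiv f_{3k}$ discards, most conveniently by using the factor-$3$ dissection \eqref{eq10} of $f_3/f_1^3$ through the rewriting $f_3^2/f_1^2=f_1 f_3\cdot f_3/f_1^3$, so that one factor of $3$ becomes explicit, and then showing the complementary series is itself $\equiv 0\pmod 3$ on the class $12n+11$. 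The careful tracking of these integer coefficients modulo $9$ through the nested dissections is where the real work lies.
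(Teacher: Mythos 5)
Your mod-$2$ argument and your mod-$3$ reduction of the generating function to $f_1^4f_8^2/(f_2^2f_4^4)$ are correct, and the exact $2$-dissection $f_1^4f_8^2/(f_2^2f_4^4)=f_4^6/(f_2^4f_8^2)-4q\,f_8^6/f_4^6$ is a genuinely nice observation the paper does not exploit: it shows at a stroke that $\overline{R_{4,3}}(4n+3)\equiv 0\pmod 3$, which disposes of the classes $3,9\pmod{12}$ inside the progression $9n+3$ (your $f_6^2/f_3^2$ argument for the class $9$ is also right) and delivers the mod-$3$ part of \eqref{cong-4} for free. This inverts the paper's order of operations, which instead $3$-dissects $\psi(q)\varphi(-q)\varphi(-q^4)$ first and then extracts $q^{3n}$, $q^{3n+1}$. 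The gap is at the even classes: your remaining claim $[q^{9k+6}]\,f_2^6/(f_1^4f_4^2)\equiv 0\pmod 3$ is exactly equivalent to \eqref{cong-2} (mod $3$) for odd $n$, i.e.\ it is the entire surviving content of the theorem, and ``the resulting coefficients are seen to vanish modulo $3$'' is an assertion, not a proof. In the paper's version of this step the vanishing is \emph{not} term-by-term: it arises from a cancellation between two nontrivial series, $2X$ from one branch (cf.\ \eqref{p1eq12}) and $X$ from the other (cf.\ \eqref{p1eq15}), summing to $3X\equiv 0\pmod 3$. An analogous cancellation must be exhibited in your dissection, and until the $3$-dissection of $f_2^6/(f_1^4f_4^2)$ via \eqref{eq9} (or via $\varphi(-q)\varphi(-q^2)\psi(q^2)$ and Lemma \ref{lemma:lem1}) is actually carried through, \eqref{cong-2} is not proved.

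For \eqref{cong-4} the situation is worse. Your decomposition $\mathrm{GF}=G_3+3H$ with $[q^{12n+11}]G_3=0$ is correct but buys nothing: ``show $[q^{12n+11}]H\equiv 0\pmod 3$'' is verbatim the statement $[q^{12n+11}]\mathrm{GF}\equiv 0\pmod 9$, so the problem has been restated, not reduced. The plan you then sketch --- $3$-dissect $\frac{f_2}{f_1^2}\varphi(-q^4)$ via \eqref{eq9} and \eqref{3-dissec-varphi}, then $2$-dissect via \eqref{f_1^4} and \eqref{dis1byf1^4} --- is precisely the paper's route (see \eqref{p1eq19} onward), but all of the actual content lies in the mod-$9$ bookkeeping you defer: the paper controls the odd powers of $f_1$ by writing them through $(f_3/f_1^3)^3$ and $(f_3/f_1^3)^9$ and using \eqref{eq10} with $(a+3qb)^3\equiv a^3$ and $(a+3qb)^9\equiv a^9\pmod 9$, whereupon the surviving series \eqref{p1eq21} and \eqref{p1eq23} are visibly even, killing the class $q^{2n+1}$. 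None of this appears in your proposal, and your alternative rewriting $f_3^2/f_1^2=f_1f_3\cdot f_3/f_1^3$ would additionally require a $2$-dissection of $f_1f_3$, which is not among the identities available in the paper's toolkit. So as written, your argument for \eqref{cong-4} is a plan whose decisive computations are missing, even though the tools you name are the right ones.
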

\begin{proof}[Proof of Equation \eqref{cong-2}]
	Putting $\ell=4, \mu=3$ in \eqref{eq2}, we have
	\begin{align}\label{p1eq8}
	\sum_{n\geq 0}\overline{R_{4,3}}(n)q^n&=\frac{f_{2}f_{4}^{2} f_{3}^{2}f_{24}}{f_1^2f_{8}f_{6} f_{12}^{2}} \notag \\
	&\equiv \frac{f_{3} f_{24}}{f_{6}f_{12}^{2}} \frac{f_{1}f_{2} f_{4}^{2}}{f_{8}} \pmod3 \notag \\
	& = \frac{f_{3} f_{24}}{f_{6}f_{12}^{2}} \psi(q) \varphi(-q) \varphi(-q^4) \pmod3 .
	\end{align}
	
	Using \eqref{3-dissec-varphi} and  \eqref{3-dissec-psi},  we deduce
	\begin{align*}
	\psi(q) \varphi(-q) \varphi(-q^4)= &\big(f(q^3,q^6)+ q\psi(q^9) \big) \big(\varphi(-q^9)-2qf(-q^3,-q^{15})\big)\\  \quad &\big(\varphi(-q^{36})-2q^4f(-q^{12},-q^{60})\big).
	\end{align*}
	Putting the above value of in \eqref{p1eq8} and  extracting the terms of the form $q^{3n}$ and then replacing $q^3$ by $q$ we get 
	\begin{align}\label{p1eq9}
	\sum_{n\geq 0}\overline{R_{4,3}}(3n)q^n&\equiv \frac{f_{1} f_{8}}{f_{2}f_{4}^{2}}  \big[ f(q,q^2) \varphi(-q^3) \varphi(-q^{12})+q^2 \psi(q^3)f(-q,-q^{5})f(-q^{4},-q^{20}) \big]. \notag \\
	\end{align}
	
	We see that 
	\begin{align}\label{p1eq11}
	&\frac{f_{1} f_{8}}{f_{2}f_{4}^{2}}  \bigg( f(q,q^2) \varphi(-q^3) \varphi(-q^{12})\bigg) \notag \\
	&=\frac{f_{1} f_{8}}{f_{2}f_{4}^{2}}  \bigg(\varphi(-q^3) \frac{f_{2}}{f_{1}} \varphi(-q^3) \varphi(-q^{12})\bigg) \notag \\
	&=\frac{f_{12}^{2} f_{8}}{f_{24}f_{4}^{2}} \frac{f_{3}^{4}}{f_{6}^{2}}  \notag \\
	&= \frac{f_{12}^{2} f_{3}^{4}}{f_{24}f_{6}^{2}} \bigg( \dfrac{f_{24}^4f_{36}^6}{f_{12}^8f_{72}^3}+2q^{4}\dfrac{f_{24}^3f_{36}^3}{f_{12}^7}+4q^8\dfrac{f_{24}^2f_{72}^3}{f_{12}^6} \bigg).
	\end{align}
	The last equality holds because of \eqref{eq9}.  Extracting terms of the form $q^{3n+1}$ of \eqref{p1eq11} we have the following 
	\begin{align}\label{p1eq12}
	& 2q^4\frac{f_{12}^{2} f_{3}^{4}}{f_{24}f_{6}^{2}}\frac{f_{24}^{3}f_{36}^{3}}{f_{12}^{7}} \notag \\
	&= 2q^4 \frac{ f_{3}^{4} f_{24}^{2} f_{36}^{3}}{f_{6}^{2}f_{12}^{5}}.
	\end{align}
	
	Further note that for the second part we have, 
	\begin{align}\label{p1eq13}
	&\frac{f_{1} f_{8}}{f_{2}f_{4}^{2}}  \bigg( q^2 \psi(q^3)f(-q,-q^{5})f(-q^{4},-q^{20})\bigg) \notag \\
	&=q^2\frac{f_{1} f_{8}}{f_{2}f_{4}^{2}}  \bigg(\psi(q^3)^{2}\chi(-q) \psi(q^{12} \chi(-q^4)\bigg) \notag \\
	&\equiv q^2 \frac{f_{6}^{4} f_{24}^{2}}{f_{12}f_{4}f_{2}^{2}} \frac{1}{f_{1}^{4}}   \pmod3 \notag\\
	&\equiv q^2 \frac{f_{6}^{3} f_{24}^{2}}{f_{12}f_{3}} \frac{f_{2}}{f_{1}f_{4}}   \pmod3 .
	\end{align}
	By using the $3$-dissection  \eqref{eq32} and collecting terms of the form $q^{3n+1}$ from the above expression, we get
	\begin{align}\label{p1eq14}
	& q^4 \frac{f_{6}^{3} f_{24}^{2}}{f_{12}f_{3}} \frac{f_{6}^{4} f_{9}^{3} f_{36}^{3}}{f_{3}^{4}f_{12}^{4}f_{18}^{3}} \notag\\
	&= q^4 \frac{f_{6}^{7} f_{24}^{2}f_{9}^{3}f_{36}^{3}}{f_{12}^{5}f_{3}^{5}f_{18}^{3}}.
	\end{align}
	Using the binomial theorem we know that $f_{6}^{9} \equiv f_{18}^{3}$ and hence we can further reduce the above equation as the following
	\begin{align}\label{p1eq15}
	&\equiv q^4\frac{f_{24}^{2}f_{3}^{4}f_{36}^{3}}{f_{12}^{5}f_{6}^{2}}\pmod{3}.
	\end{align}
	Combining \eqref{p1eq9}, \eqref{p1eq12}, \eqref{p1eq15} together we deduce that
	\begin{align}\label{p1eq16}
	\sum_{n\geq 0}\overline{R_{4,3}}(9n + 3)q^n&=0 \pmod3.
	\end{align}
	
	Also, it is very easy to see that, 	\begin{align*}
	\sum_{n\geq 0}\overline{R_{4,3}}(n)q^n&=\frac{f_{2}f_{4}^{2} f_{3}^{2}f_{24}}{f_1^2f_{8}f_{6} f_{12}^{2}} \notag \\
	&\equiv 1 \pmod2 .
	\end{align*}
	As a result, 	\begin{align}\label{p1eq17}
	\sum_{n\geq 0}\overline{R_{4,3}}(9n + 3)q^n&=0 \pmod2.
	\end{align}
	Equations \eqref{p1eq16}, \eqref{p1eq17} together complete the proof.
\end{proof}
\begin{proof}[Proof of Equation \eqref{cong-4}]
	We have
	\begin{align}\label{p1eq18}
	\sum_{n\geq 0}\overline{R_{4,3}}(n)q^n&=\frac{f_{2}f_{4}^{2} f_{3}^{2}f_{24}}{f_1^2f_{8}f_{6} f_{12}^{2}} \notag \\
	&= \frac{f_{3}^{2} f_{24}}{f_{6}f_{12}^{2}} \frac{f_{2}}{f_{1}^{2}} \frac{ f_{4}^{2}}{f_{8}}  \notag \\
	& =\frac{f_{3}^{2} f_{24}}{f_{6}f_{12}^{2}} \bigg(\dfrac{f_6^4f_9^6}{f_3^8f_{18}^3}+2q\dfrac{f_6^3f_9^3}{f_3^7}+4q^2\dfrac{f_6^2f_{18}^3}{f_3^6}\bigg)\bigg(\varphi(-q^{36})-2q^{4}f(-q^{12},-q^{60}) \bigg).
	\end{align}
	The last equality holds for \eqref{eq9} and  \eqref{3-dissec-varphi}.
	Extracting the terms of the form $q^{3n+2}$ and after that dividing by $q^2$ and then  replacing $q^3$ by $q$ we get 
	\begin{align}\label{p1eq19}
	\sum_{n\geq 0}\overline{R_{4,3}}(3n+2)q^n&\equiv \frac{f_{1}^{2} f_{8}}{f_{2}f_{4}^{2}}  \big[ 4 \frac{f_{2}^{2} f_{6}^{3}}{f_{1}^{6}} \varphi(-q^{12}) -4q \frac{f_{2}^{3} f_{3}^{3}}{f_{1}^{7}} f(-q^4, -q^{20}) \big]. 
	\end{align}
	
	We see that 
	\begin{align}\label{p1eq20}
	& \frac{f_{1}^{2} f_{8}}{f_{2}f_{4}^{2}}\bigg( 4 \frac{f_{2}^{2} f_{6}^{3}}{f_{1}^{6}} \varphi(-q^{12}) \bigg) \notag \\
	&= 4 \frac{f_{2} f_{6}^{3}}{f_{4}^{2}} \varphi(-q^{12})\frac{1}{f_{1}^{4}} \notag \\
	&= 4 \frac{f_{2} f_{6}^{3}}{f_{4}^{2}} \varphi(-q^{12})  \bigg(\frac{f_{4}^{14}}{f_{2}^{14} f_{8}^4} + 4 q \frac{f_{4}^2 f_{8}^4}{f_{2}^{10}}\bigg).
	\end{align}
	The last equality holds because od \eqref{dis1byf1^4}.
	Extracting terms of the form $q^{2n+1}$ from \eqref{p1eq20}, and then dividing by $q$ and after that replace $q^2$ by $q$  we have the following 
	\begin{align}\label{p1eq21}
	& 16 \varphi(-q^{6}) \frac{f_{3}^3 f_{4}^4}{f_{1}^{9}} \notag \\
	&=  16 \varphi(-q^{6})f_{4}^4 \bigg(\frac{f_{3} }{f_{1}^{3}}\bigg)^3\notag \\
	&=16 \varphi(-q^{6})f_{4}^4 \bigg( \frac{f_{4}^{6}f_{6}^{3} }{f_{2}^{9}f_{12}^{2}} +3q \frac{f_{4}^{2}f_{6} f_{12}^{2} }{f_{2}^{7}}\bigg)^{3} \notag \\
	&\equiv 16 \varphi(-q^{6})f_{4}^4  \frac{f_{4}^{18}f_{6}^{9} }{f_{2}^{27}f_{12}^{6}} \pmod9 .
	\end{align}
	The last equality holds because of \eqref{eq10}.
	We see that there are no terms of the form $q^{2n+1}$ in \eqref{p1eq21}. 
	Further note that for the second part we have, 
	\begin{align}\label{p1eq22}
	\frac{f_{1}^{2} f_{8}}{f_{2}f_{4}^{2}}  \bigg( -4q \frac{f_{2}^{3} f_{3}^{3}}{f_{1}^{7}} f(-q^4, -q^{20}) \bigg) \notag \\
	= -4q f(-q^4, -q^{20}) \frac{f_{2}^{2} f_{8}}{f_{4}^{2}}  \bigg( \frac{f_{3} }{f_{1}^{3}}\bigg)^{3} f_{1}^{4}.
	\end{align}
	Using \eqref{eq10}, and \eqref{f_1^4} in \eqref{p1eq22}, then  collecting terms of the form $q^{2n+1}$ from the above expression, dividing by $q$, and then replacing $q^2$ by $q$  we get
	\begin{align}\label{p1eq23}
	&\equiv  -4 f(-q^2, -q^{10}) \frac{f_{1}^{2} f_{4}}{f_{2}^{2}} \frac{f_{2}^{28} f_{3}^{9}}{f_{1}^{29}f_{6}^{6}f_{4}^{4}} \pmod9\notag\\
	&=   -4 f(-q^2, -q^{10}) \frac{f_{2}^{26} }{f_{6}^{6}f_{4}^{3}} \bigg(\frac{f_{3}}{f_{1}^{3}}\bigg)^{9} \notag \\
	&= -4 f(-q^2, -q^{10}) \frac{f_{2}^{26} }{f_{6}^{6}f_{4}^{3}} \bigg(   \frac{f_{4}^{6}f_{6}^{3} }{f_{2}^{9}f_{12}^{2}} +3q \frac{f_{4}^{2}f_{6} f_{12}^{2} }{f_{2}^{7}}\bigg)^{9}\notag \\
	& \equiv -4 f(-q^2, -q^{10}) \frac{f_{2}^{26} }{f_{6}^{6}f_{4}^{3}} \bigg(   \frac{f_{4}^{6}f_{6}^{3} }{f_{2}^{9}f_{12}^{2}} \bigg)^{9} \pmod 9.
	\end{align}
	Note that, there are no terms of the of the form $q^{2n+1}$ in \eqref{p1eq23}.

	Combining \eqref{p1eq19}, \eqref{p1eq21}, \eqref{p1eq23} together we deduce that
	\begin{align}\label{p1eq16a}
	\sum_{n\geq 0}\overline{R_{4,3}}(12n + 11)q^n&=0 \pmod9.
	\end{align}
	
\end{proof}
\begin{remark}
	Proof of  \eqref{cong-13} is an extension of the result $\overline{R}_{4, 3}(12n + 7) \equiv 0 \pmod{4}$ which appeared in this paper \cite{Nadji}, while the proof of equations \eqref{cong-14}, \eqref{cong-2}, \eqref{cong-4} are proofs independent of the usage of Radu's algorithm \cite{Radu, Radu2}. 
\end{remark}
\section{Congruences for $\overline{R_{4,9}}(n)$ }\label{(4, 9)-result}
\begin{theorem}
	For all $n\geq 0$, we have
	\begin{align}
	\overline{R_{4,9}}(12n + 3)&\equiv 0 \pmod 8,\label{cong-10}\\
	\overline{R_{4,9}}(12n + 7)&\equiv 0 \pmod 8.\label{cong-11}\\
	\overline{R_{4,9}}(12n + 11)&\equiv 0 \pmod 8.\label{cong-12}
	\end{align}
\end{theorem}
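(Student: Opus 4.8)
The plan is to recycle the generic modulo-$8$ machinery already deployed for Theorem \ref{thm:rd}, since the pair $(4,9)$ differs from $(4,3)$ only in that the modulus $3$ is replaced by $9$. First I would recast the generating function in theta form. Substituting $\ell=4,\mu=9$ in \eqref{eq2} gives
\begin{align*}
\sum_{n\geq 0}\overline{R_{4,9}}(n)q^n=\frac{f_2f_4^2f_9^2f_{72}}{f_1^2f_8f_{18}f_{36}^2},
\end{align*}
and applying \eqref{varphi(-q)} factor by factor this collapses neatly to
\begin{align*}
\sum_{n\geq 0}\overline{R_{4,9}}(n)q^n=\frac{\varphi(-q^4)\varphi(-q^9)}{\varphi(-q)\varphi(-q^{36})},
\end{align*}
which is the exact analogue of \eqref{seq1} with $\varphi(-q^3),\varphi(-q^{12})$ replaced by $\varphi(-q^9),\varphi(-q^{36})$.

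Next I would run the same two reductions used in the $(4,3)$ argument. Using $\frac{1}{\varphi(-q)}=\varphi(q)\varphi(q^2)^2\varphi(q^4)^4\cdots$ and discarding the higher factors $\varphi(q^4)^4,\varphi(q^8)^8,\dots$, each congruent to $1\pmod 8$, reduces the series modulo $8$ to
\begin{align*}
\sum_{n\geq 0}\overline{R_{4,9}}(n)q^n\equiv\frac{\varphi(q)\varphi(q^2)^2\varphi(-q^4)\varphi(-q^9)}{\varphi(-q^{36})}\pmod 8.
\end{align*}
The key observation is that $1/\varphi(-q^{36})$ is a power series in $q^{36}$, so every exponent it contributes is a multiple of $36$, hence of $12$; convolving with it never alters a residue class modulo $12$. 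It therefore suffices to show that the numerator $\varphi(q)\varphi(q^2)^2\varphi(-q^4)\varphi(-q^9)$ carries no term $q^{12n+r}$ modulo $8$ for $r\in\{3,7,11\}$.

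I would then expand this numerator exactly as in \eqref{seq4}, using $\varphi(q^2)^2\equiv 1+4\sum_{n\geq 1}q^{2n^2}+4\sum_{i,j\geq 1}q^{2(i^2+j^2)}\pmod 8$ and retaining only terms whose coefficient is not a multiple of $8$. The surviving exponent shapes are precisely $n^2$, $2n^2$, $4n^2$, $9n^2$, $2(i^2+j^2)$, $i^2+4j^2$, $i^2+9j^2$ and $4i^2+9j^2$. Invoking that every square lies in $\{0,1,4,9\}\pmod{12}$, a short case analysis shows that each of these shapes reduces modulo $12$ into the set $\{0,1,2,4,5,6,8,9,10\}$, so none can ever be $\equiv 3,7,$ or $11$. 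This annihilates all three target residue classes simultaneously.

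The point worth flagging is exactly where $(4,9)$ is gentler than $(4,3)$, and I expect this to replace what was the real obstacle in Theorem \ref{thm:rd} rather than introduce a new one. There the mixed shapes $i^2+3j^2$ and $3i^2+4j^2$ \emph{do} reach the residue $7\pmod{12}$, which forced the delicate ``count twice'' cancellation and left $r=3$ unreachable. Here $9j^2\equiv 0$ or $9\pmod{12}$ (never $3$), so the analogous shapes $i^2+9j^2$ and $4i^2+9j^2$ land only in $\{0,1,4,6,9,10\}$ and $\{0,1,4,9\}$ respectively, missing $\{3,7,11\}$ outright. Hence no cancellation is required and \eqref{cong-10}, \eqref{cong-11}, \eqref{cong-12} all drop out at once; the only genuine labour is the mechanical residue enumeration, which I anticipate being routine.
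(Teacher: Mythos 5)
Your proposal is correct and matches the paper's own proof essentially step for step: the same theta quotient $\varphi(-q^4)\varphi(-q^9)/\bigl(\varphi(-q)\varphi(-q^{36})\bigr)$, the same reduction via $1/\varphi(-q)=\varphi(q)\varphi(q^2)^2\varphi(q^4)^4\cdots$ with $\varphi(q^i)^j\equiv 1\pmod 8$ for even $j\geq 4$, and the same observation that, since $4$ and $9$ are squares, every surviving exponent is a square, twice a square, or a sum of two squares, hence lies in $\{0,1,2,4,5,6,8,9,10\}\pmod{12}$ and misses $3,7,11$. Your explicit residue enumeration for $i^2+9j^2$ and $4i^2+9j^2$, and the contrast with the $(4,3)$ case, are accurate refinements of the paper's slightly terser argument.
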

\begin{proof}[Proofs of Equation \eqref{cong-10}, \eqref{cong-11}, \eqref{cong-12}]
	The generating function of $(4, 9)$- regular overpartitions can be written as 
	\begin{align}\label{pleq30}
	\sum_{n\geq 0}\overline{R_{4,9}}(n)q^n&= \frac{\varphi(-q^4)\varphi(-q^9)}{\varphi(-q)\varphi(-q^{36})} .
	\end{align}
	Since $\varphi(-q^{36})$ already consists of terms which are of the form $q^{12n}$, we need to only check the remaining product and look for terms of the form $q^{12n + r}, \text{ where } r \in \mathbb{Z}_{12}$ in order to prove our congruences. We can re-write $\frac{\varphi(-q^4)\varphi(-q^9)}{\varphi(-q)}$ in the following manner
	\begin{align}\label{pleq31}
	&\equiv \varphi(-q^4)\varphi(-q^9)\varphi(q)\varphi(q^2)^2 \pmod{8}\notag\\
	&\equiv \biggl(1 + 2 \sum_{n \geq 1}(-q^{4})^{n^2}\biggl) \biggl(1 + 2 \sum_{n \geq 1}(-q^{9})^{n^2}\biggl)\biggl(1 + 2 \sum_{n \geq 1}q^{n^2}\biggl) 
	\biggl(1 + 2 \sum_{n \geq 1}q^{2n^2}\biggl)^{2} \pmod{8} \notag \\
	&\equiv 1 + 2 \sum_{n \geq 1}q^{n^2} + 2 \sum_{n \geq 1}(-q^{4})^{n^2} + 2 \sum_{n \geq 1}(-q^{9})^{n^2} +
	4 \sum_{n \geq 1}q^{2n^2} +4\sum_{i,j \geq 1}q^{2(i^2+j^2)}\notag\\
	& + 4  \sum_{i, j \geq 1} (-1)^{j^2}q^{i^{2} + 4 j^{2}} + 4  \sum_{i, j \geq 1}(-1)^{i^2+j^2}q^{4i^{2} +  9 j^{2}} + 4  \sum_{i, j \geq 1}(-1)^{j^2}q^{i^{2} +  9 j^{2}}  \pmod{8}.
	\end{align}
	In the first congruence, we mainly use the fact $\frac{1}{\varphi(-q)} = \varphi(q)\varphi(q^2)^{2}\varphi(q^{4})^{4} \cdots$ and  $\varphi(q^i)^j \equiv 1 \pmod{8} \text{ for all even } j \geq 4$.
	In order to prove  \eqref{cong-10}, \eqref{cong-11}, and \eqref{cong-12}, we need to consider the fact that for our case $4 \text{ and } 9$ are both squares. By looking into the power series expansion from equation \eqref{pleq31} we can clearly see that the exponents of $q$ are either squares or $2$- times a square number or the sum of two squares. We can further use the fact that the square of any integer can belong to either one of the following residue classes $\{0, 1, 4, 9\} \pmod{12}$ and hence we can claim that there are no exponents of $q$ of the form $12n + 3$ or $12n +7$ or $12n + 11$, which proves our theorem.
\end{proof}
\begin{theorem}
	For all $n\geq 0$, we have
	\begin{align}
	\overline{R_{4,9}}(18n + 12)&\equiv 0 \pmod 3,\label{cong-20}\\
	\overline{R_{4,9}}(18n + 15)&\equiv 0 \pmod 3.\label{cong-21}
	\end{align}
\end{theorem}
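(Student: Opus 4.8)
The plan is to treat the two congruences \eqref{cong-20} and \eqref{cong-21} in parallel, since $12$ and $15$ are both divisible by $3$. First I would set $\ell=4,\mu=9$ in \eqref{eq2}, which (as in \eqref{pleq30}) gives $\sum_{n\ge0}\overline{R_{4,9}}(n)q^n=\varphi(-q^4)\varphi(-q^9)/(\varphi(-q)\varphi(-q^{36}))$. Working modulo $3$ and using $f_k^3\equiv f_{3k}$ repeatedly --- equivalently $\varphi(-q^9)\equiv\varphi(-q)^9$ and $\varphi(-q^{36})\equiv\varphi(-q^4)^9\pmod 3$ --- the two ``large'' theta factors cancel most of the small ones and the generating function collapses to the clean shape
\[
\sum_{n\ge0}\overline{R_{4,9}}(n)q^n\equiv\frac{\varphi(-q)^8}{\varphi(-q^4)^8}\equiv\frac{\varphi(-q)^2\varphi(-q^3)^2}{\varphi(-q^4)^2\varphi(-q^{12})^2}\pmod 3,
\]
the last step again by $\varphi(-q)^6\equiv\varphi(-q^3)^2$ and $\varphi(-q^4)^6\equiv\varphi(-q^{12})^2$. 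This reduction is what makes the problem manageable: everything is now governed by the single quotient $\Phi(q):=\varphi(-q)^2/\varphi(-q^4)^2=f_1^4f_8^2/(f_2^2f_4^4)$, since the right-hand side above is exactly $\Phi(q)\,\Phi(q^3)\pmod 3$.

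Next, because $18n+12$ and $18n+15$ are $\equiv 0\pmod 3$, I would extract the $q^{3m}$ part. As $\varphi(-q^3)^2$ and $\varphi(-q^{12})^2$ are power series in $q^3$, only the residue-$0$ component of $\Phi(q)$ survives, and after $q^3\mapsto q$ one is left with $\sum_{m\ge0}\overline{R_{4,9}}(3m)q^m\equiv\Phi(q)\,\widetilde\Phi(q)\pmod 3$, where $\widetilde\Phi$ denotes that residue-$0$ component. To compute it I need the $3$-dissection of $\Phi$: the numerator $\varphi(-q)^2$ is handled by squaring the theta dissection \eqref{3-dissec-varphi} (with $q\mapsto-q$), while the denominator contributes $1/\varphi(-q^4)^2=f_8^2/f_4^4$, whose $3$-dissection I would obtain by squaring \eqref{eq9} to get the $3$-dissection of $f_2^2/f_1^4$ and then replacing $q$ by $q^4$.

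Finally, tracing the target residues through the first extraction shows $18n+12=3(6n+4)$ and $18n+15=3(6n+5)$, so I must isolate $m\equiv4,5\pmod 6$ from $\Phi(q)\widetilde\Phi(q)$. I would do this by one further $3$-dissection (separating $m\equiv1,2\pmod 3$, i.e.\ the subprogressions $\overline{R_{4,9}}(9k+3)$ and $\overline{R_{4,9}}(9k+6)$) followed by a $2$-dissection to retain only the odd values of $k$, using \eqref{eq9}, \eqref{eq10}, \eqref{eqAHS} together with the $2$-dissections \eqref{f_1^4} and \eqref{dis1byf1^4}, and reducing every coefficient modulo $3$ via $f_k^3\equiv f_{3k}$. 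The aim of this bookkeeping is to see that the surviving coefficients in the two relevant classes are each divisible by $3$.

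The hard part will be the iterated dissection itself --- in particular, correctly deriving the $3$-dissection of the reciprocal square $f_2^2/f_1^4$ from \eqref{eq9} and then combining it with the dissection of $\varphi(-q)^2$ while carrying the $q^4$-shifts and Ramanujan theta factors cleanly through a second (and implicitly third) dissection. This is precisely where the factor of $3$ has to materialise, so the computation must be organised so that the coefficients in the classes $m\equiv4$ and $m\equiv5\pmod 6$ are manifestly multiples of $3$ rather than merely numerically so.
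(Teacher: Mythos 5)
Your opening reduction is valid: since $\varphi(-q)^3\equiv\varphi(-q^3)\pmod 3$, one indeed has $\varphi(-q^9)\equiv\varphi(-q)^9$ and $\varphi(-q^{36})\equiv\varphi(-q^4)^9\pmod 3$, so the generating function \eqref{pleq30} does collapse to $\Phi(q)\Phi(q^3)$ with $\Phi(q)=f_1^4f_8^2/(f_2^2f_4^4)$, and your bookkeeping of the progressions ($18n+12=3(6n+4)$, $18n+15=3(6n+5)$, hence residues $1$ and $2$ modulo $3$ followed by keeping odd $k$) is correct. But the proposal stops exactly where the theorem lives. The entire content of \eqref{cong-20} and \eqref{cong-21} is the terminal cancellation: in the paper's proof, the class $9n+3$ reduces to the two-term series \eqref{seq10}, namely $-f_3^3/f_1-8q\,f_{12}^3/f_4$, whose odd part is $-9q\,f_{12}^3/f_4$ by \eqref{eq33} (the factor $3$ appears as the numerical coincidence $-1-8=-9$), while for the class $9n+6$ the final series \eqref{seq14} is a power series in $q^2$, so its odd part vanishes identically. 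Your sketch exhibits neither mechanism and explicitly defers the computation that ``must be organised so that'' the factor $3$ materialises; without producing that cancellation, nothing is proved.

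Moreover, your restructuring makes the missing computation substantially harder than it needs to be. The paper never reduces $\varphi(-q^9)/\varphi(-q^{36})$ modulo $3$ at all: it is a power series in $q^9$, so it rides through \emph{both} rounds of $3$-extraction as a spectator, and only the simple product $\varphi(-q^4)\cdot f_2/f_1^2$ is ever dissected (via \eqref{3-dissec-varphi}, \eqref{eq9}, \eqref{eq32} and Lemma \ref{lemma:lem1}); after the two extractions the spectator resurfaces as $\varphi(-q)/\varphi(-q^4)$ and cancels, which is precisely what yields the tiny expression \eqref{seq10}. Your mod-$3$ conversion trades this $q^9$-series for $\Phi(q^3)$, which survives only \emph{one} extraction: after $q^3\mapsto q$ it becomes $\Phi(q)$ and must be $3$-dissected together with $\widetilde\Phi(q)$, itself already a three-term combination of products of dissection components, so your second round is a dissection of a product of two multi-term sums, each term needing its own treatment. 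Relatedly, the identities you list (\eqref{eq9}, \eqref{eq10}, \eqref{eqAHS}, \eqref{f_1^4}, \eqref{dis1byf1^4}) do not obviously cover what arises at that stage; the paper needs Toh's identity \eqref{eq32}, the theta identities of Lemma \ref{lemma:lem1}, and the $2$-dissection \eqref{eq33} to close the argument. The fix is structural and simple: keep $\varphi(-q^9)/\varphi(-q^{36})$ exact as a $q^9$-spectator rather than converting it, and your computation shrinks to the paper's.
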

\begin{proof}[Proofs of Equation \eqref{cong-20}, \eqref{cong-21}]
	As we know the generating function of $(4, 9)$- regular overpartitions can be written in the following way
	\begin{align}\label{seq6}
	\sum_{n\geq 0}\overline{R_{4,9}}(n)q^n&= \frac{\varphi(-q^4)\varphi(-q^9)}{\varphi(-q)\varphi(-q^{36})} ,
	\end{align}
	we use this to prove the two concerned congruences. First, we begin by extracting terms of the form $q^{9n + 3}$ from both sides of equation \eqref{seq6}. One thing to notice is that in order to extract terms of the form $q^{9n + 3}$ from the right side of equation \eqref{seq6}, we need to only concern ourselves with the fraction $\frac{\varphi(-q^4)}{\varphi(-q)}$. We begin by using the $3$- dissections in \eqref{eq9} and \eqref{3-dissec-varphi} and extracting terms of the form $q^{3n}$ from the expression $\frac{\varphi(-q^4)}{\varphi(-q)}$ we get
	\begin{align}\label{seq7new}
	&\equiv \dfrac{ \varphi(-q^{36}) f_{9}^{6} f_{6}^{4}  }{f_3^{8}  f_{18}^{3}} - 8 q^{6} \dfrac{f(-q^{12}, -q^{60})f_{6}^{2} f_{18}^{3}}{f_{3}^{6}} \pmod{3}.
	\end{align}
	Now we replace $q^{3}$ with $q$ in \eqref{seq7new} we get,
	\begin{align}\label{seq7new1}
	&\equiv \dfrac{ \varphi(-q^{12}) f_{2}^{4} f_{3}^{6}  }{f_1^{8}  f_{6}^{3}} - 8 q^{2} \dfrac{f(-q^{4}, -q^{20})f_{6}^{3} f_{2}^2}{f_{1}^{6}} \pmod{3}.
	\end{align}
	In the above equation \eqref{seq7new1}, we use the identity in \eqref{f(q,q^5)} (we set $q$ to $-q^{4}$ in the identity) and the known fact that $f_{k}^{3} \equiv f_{3k} \pmod{3}$ we get 
	\begin{align}\label{seq7}
	&\equiv \dfrac{ \varphi(-q^{12}) f_{2} f_{3}^{4}  }{f_1^{2}  f_{6}^{2}} - 8 q^{2} \dfrac{\psi(q^{12})f_{6}^{4} f_{4}}{f_{3}^{2}f_{8}f_{2}} \pmod{3}.
	\end{align}
	Now we further expand the above equation \eqref{seq7} and extract terms of the form $q^{3n + 1}$. For the expansion we make use of the $3$- dissections in \eqref{eq9} and \eqref{eq32} and we get
	
	\begin{align}
	&\equiv 2q\dfrac{\varphi(-q^{12})f_{6}f_{9}^{3}}{f_{3}^{3}} - 8q^{4}\dfrac{f_{6}f_{12}f_{36}^{3}}{f_{3}^{2}f_{24}} \pmod{3}.
	\end{align}
	From here we again first divide the expression by $q$ and then replace $q^{3}$ by $q$ and finally get the following expression
	\begin{align}\label{seq8}
	&\equiv -\dfrac{\varphi(-q^{4}) f_{2} f_{3}^{3}}{f_{1}^{3}} - 8q \dfrac{f_{2}f_{4}f_{12}^{3}}{f_{1}^{2}{f_{8}}} \pmod{3}.
	\end{align}
	At this stage we are in a position where we can show how equation \eqref{seq6} looks once we have extracted terms of the form $q^{9n + 3}$ from both side
	\begin{align}\label{seq9}
	\sum_{n\geq 0}\overline{R_{4,9}}(9n + 3)q^n &\equiv -\dfrac{\varphi(-q^{4}) \varphi(-q) f_{2} f_{3}^{3}}{\varphi(-q^{4})f_{1}^{3}} - 8q \dfrac{\varphi(-q)f_{2}f_{4}f_{12}^{3}}{\varphi(-q^{4})f_{1}^{2}{f_{8}}} \pmod{3}.
	\end{align}
	Further simplification of the above equation \eqref{seq9} using the identity \eqref{varphi(-q)} give us
	\begin{align}\label{seq10}
	&= -\dfrac{f_{3}^{3}}{f_{1}} - 8 q \dfrac{f_{12}^{3}}{f_{4}}.
	\end{align}
	
	From equation \eqref{seq10} we extract terms of the form $q^{2n + 1}$ by using the $2$- dissections in \eqref{eq33} and get the following 
	\begin{align}\label{seq11}
	&= -q \dfrac{f_{12}^{3}}{f_{4}}-8q \dfrac{f_{12}^{3}}{f_{4}} \notag \\
	&= -9q \dfrac{f_{12}^{3}}{f_{4}}.
	\end{align}
	This proves the congruence in \eqref{cong-20}.
	
	For proving the congruence in \eqref{cong-21}, we adhere to a similar strategy as was used in proving the previous congruence \eqref{cong-20} by first trying to extract terms of the form $q^{3n}$ which we already have from \eqref{seq7} and now instead of extracting terms of the form $q^{3n + 1}$, this time we go for extracting the terms of the form $q^{3n + 2}$ from equation \eqref{seq7}. For doing that again we use the $3$- dissections in \eqref{eq9} and \eqref{eq32} and then collect the terms of the form $q^{3n + 2}$ which leads us to the following expression
	\begin{align}\label{seq11new}
	\equiv 4q^{2}\dfrac{\varphi(-q^{12})f_{18}^{3}f_{3}^{4}}{f_{3}^{6}} - 8q^{2}\dfrac{\psi(q^{12})f_{6}^{2}f_{36}^{9}}{f_{3}^{2}f_{18}^{3}f_{24}^{2}f_{72}^{3}}.
	\end{align}
	Simplyfying equation \eqref{seq11new} by using the identity in \eqref{psi}, we get
	\begin{align}\label{seq11new1}
	\equiv 4q^{2}\dfrac{\varphi(-q^{12})f_{18}^{3}f_{3}^{4}}{f_{3}^{6}} - 8q^{2}\dfrac{f_{6}^{2}f_{36}^{9}}{f_{3}^{2}f_{12}f_{18}^{3}f_{72}^{3}}.
	\end{align}
	Now we divide by $q^{2}$ and replace $q^{3}$ by $q$ to get the following expression
	\begin{align}\label{seq12}
	&\equiv 4 \dfrac{\varphi(-q^{4})f_{6}^{3}}{f_{1}^{2}} - 8 \dfrac{f_{2}^{2} f_{12}^{9}}{f_{4} f_{1}^{2}f_{6}^{3}f_{36}^{3}} \pmod{3}.
	\end{align}
	Similarly as before we try and re-write equation \eqref{seq6} by extracting terms of the form $q^{9n + 6}$ from both sides and replacing $q^9$ by $q$
	\begin{align}\label{seq13}
	\sum_{n\geq 0}\overline{R_{4,9}}(9n + 6)q^n &\equiv 4\dfrac{\varphi(-q^{4}) \varphi(-q) f_{2} f_{6}^{3}}{\varphi(-q^{4})f_{1}^{2}} - 8 \dfrac{\varphi(-q)f_{2}^{2}f_{4}f_{12}^{9}}{\varphi(-q^{4})f_{1}^{2}{f_{4} f_{6}^{3}f_{36}^{3}}} \pmod{3}.
	\end{align}
	Further simplification of equation \eqref{seq13} by using the theta function identity in \eqref{varphi(-q)} leads us to the following expression
	\begin{align}\label{seq14}
	\sum_{n\geq 0}\overline{R_{4,9}}(9n + 6)q^n &\equiv 4\dfrac{f_{6}^{3}}{f_{2}} - 8 \dfrac{f_{2}f_{12}^{9}}{\varphi(-q^{4}){f_{4} f_{6}^{3}f_{36}^{3}}} \pmod{3}.
	\end{align}
	It can be easily seen that in equation \eqref{seq14} there are no terms of the form $q^{2n + 1}$ on right side of the equation which immediately implies the congruence \eqref{cong-21}.
\end{proof}
\begin{remark}
	Re-proved the congruences from this \cite{AMS} paper by simply using dissection and theta function identities. Although the results in equations \eqref{cong-10}, \eqref{cong-11}, \eqref{cong-12} are special cases of the congruence $\overline{R}_{4, 9}(4n + 3) \equiv 0 \pmod{8}$ in \cite{Nadji}, our proof method differs from theirs as we only use the summation formula of $\varphi(q)$ and argue about the residue classes modulo $12$ of the exponents in the power series expansion.
\end{remark}
\section{Congruences for $\overline{R_{8,27}}(n)$ }\label{(8, 27)-result}

\begin{theorem}
	For all $n\geq 0$, we have
	\begin{align}
	\overline{R_{8,27}}(36n + 15)&\equiv 0 \pmod {8}.\label{cong-24}
	\end{align}
\end{theorem}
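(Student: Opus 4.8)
The plan is to mirror the modulo-$8$ strategy already used for the pairs $(4,3)$ and $(4,9)$, working directly from the $\varphi$-representation of the generating function instead of from a $2$-dissection. Setting $\ell=8$ and $\mu=27$ in \eqref{eq2} gives $\frac{f_2 f_8^2 f_{27}^2 f_{432}}{f_1^2 f_{16} f_{54} f_{216}^2}$, which by \eqref{varphi(-q)} can be rewritten as
\begin{align*}
\sum_{n\geq 0}\overline{R_{8,27}}(n)q^n = \frac{\varphi(-q^8)\varphi(-q^{27})}{\varphi(-q)\,\varphi(-q^{216})}.
\end{align*}
Since $\varphi(-q^{216})$ is a series in $q^{216}$ and $216\equiv 0 \pmod{36}$, its reciprocal contributes only exponents divisible by $36$; hence the coefficients of $q^{36n+15}$ are governed entirely by the residues modulo $36$ of the exponents appearing in $\frac{\varphi(-q^8)\varphi(-q^{27})}{\varphi(-q)}$.

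Next I would reduce this quotient modulo $8$ exactly as in the previous sections, using $\frac{1}{\varphi(-q)}=\varphi(q)\varphi(q^2)^2\varphi(q^4)^4\cdots$ together with $\varphi(q^i)^j\equiv 1 \pmod 8$ for even $j\geq 4$, to obtain
\begin{align*}
\frac{\varphi(-q^8)\varphi(-q^{27})}{\varphi(-q)}\equiv \varphi(q)\,\varphi(q^2)^2\,\varphi(-q^8)\,\varphi(-q^{27}) \pmod 8.
\end{align*}
Substituting $\varphi(q)=1+2\sum_{n\geq 1}q^{n^2}$ and its companions and expanding (as in \eqref{pleq31}), every term that survives modulo $8$ carries an exponent of one of the shapes $n^2$, $8n^2$, $27n^2$, $2n^2$, $2(i^2+j^2)$, $i^2+8j^2$, $i^2+27j^2$, or $8i^2+27j^2$.

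The crux, and the step I expect to be the main obstacle, is the modular bookkeeping showing that none of these shapes can be $\equiv 15 \pmod{36}$. First, the forms $8n^2$, $2n^2$ and $2(i^2+j^2)$ are even, while every integer $\equiv 15 \pmod{36}$ is odd, so they are eliminated by parity. For the remaining odd forms I would push the analysis to modulus $36$, which is genuinely necessary here: the coarser residues modulo $12$ do not suffice, since $8i^2+27j^2$ already attains the class of $15$ modulo $12$. Recording the squares modulo $36$ as $\{0,1,4,9,13,16,25,28\}$, one computes $8\cdot(\text{square})\in\{0,8,20,32\}$ and $27\cdot(\text{square})\in\{0,27\}$, and a short finite check of the attainable residues of $\{i^2\}$, $\{i^2+8j^2\}$, $\{i^2+27j^2\}$ and $\{8i^2+27j^2\}$ modulo $36$ confirms that $15$ is represented by none of them (for $8i^2+27j^2$ the only attainable residue lying in the class $3 \pmod{12}$ is $27$, not $15$). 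Since $\frac{1}{\varphi(-q^{216})}$ shifts exponents only by multiples of $36$, the coefficient of each $q^{36n+15}$ is therefore divisible by $8$, which establishes \eqref{cong-24}.
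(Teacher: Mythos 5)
Your proposal is correct and is essentially the paper's own proof: the same $\varphi$-quotient representation, the same reduction modulo $8$ via $\frac{1}{\varphi(-q)}=\varphi(q)\varphi(q^2)^2\varphi(q^4)^4\cdots$, and the same residue check modulo $36$ using the square classes $\{0,1,4,9,13,16,25,28\}$. Your additional remarks (the parity elimination of the even forms and the observation that modulus $12$ would not suffice because $8i^2+27j^2$ attains the class $3\pmod{12}$) only make explicit what the paper leaves implicit.
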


\begin{proof}
	
	We have
	\begin{align}\label{p1eq30}
	\sum_{n\geq 0}\overline{R_{8,27}}(n)q^n&= \frac{\varphi(-q^8)\varphi(-q^{27})}{\varphi(-q)\varphi(-q^{216})} 
	\end{align}
	Since $\varphi(-q^{216})$ already consists of terms which are of the form $q^{36n}$, we need to only check the remaining product and look for terms of the form $q^{36n + r}, \text{ where } r \in \mathbb{Z}_{36}$. In order to prove our congruences, we can re-write equation \eqref{p1eq30} in the following manner:
	\begin{align}\label{p1eq31}
	&\equiv\frac{ \varphi(-q^8)\varphi(-q^{27})\varphi(q)\varphi(q^2)^{2}}{\varphi(-q^{216})} \pmod{8}
	\end{align}
	because of the fact that  $\frac{1}{\varphi(-q)} = \varphi(q)\varphi(q^2)^{2}\varphi(q^{4})^{4} \cdots$ and  $\varphi(q^i)^j \equiv 1 \pmod{8}$  for all even $ j \geq 4$.
	From here, we ignore the denominator and only consider the numerator of \eqref{p1eq31} and try and search for terms of the form $q^{36n + r}$:
	\begin{align}\label{seq3new}
	&\equiv 
	\biggl(1 + 2 \sum_{n \geq 1}(-q^8)^{n^2}\biggl)
	\biggl(1 + 2 \sum_{n \geq 1}(-q^{27})^{n^2}\biggl)\biggl(1 + 2 \sum_{n \geq 1}q^{n^2}\biggl) 
	\biggl(1 + 2 \sum_{n \geq 1}q^{2n^2}\biggl)^{2} \pmod{8}
	\end{align}
	By further expanding the above sum in \eqref{seq3new} and reducing it modulo $8$ we get this expression
	\begin{align}\label{seq4new}
	&\equiv 1 + 2 \sum_{n \geq 1}q^{n^2} + 2 \sum_{n \geq 1}(-q^{8})^{n^2} + 2 \sum_{n \geq 1}(-q^{27})^{n^2}+ 4  \sum_{i, j \geq 1}(-1)^{j^2}q^{i^{2} + 27 j^{2}}\notag \\
	& + 4  \sum_{i, j \geq 1 } (-1)^{i^2+j^2}q^{8i^{2} +  27 j^{2}} + 4  \sum_{i, j \geq 1}(-1)^{i^2}q^{ 8 i^{2} +  j^{2}}+ 
	4 \sum_{n \geq 1}q^{2n^2} + 4\sum_{i,j \geq 1}q^{2(i^2+j^2)} \pmod{8}.
	\end{align}
	To prove  \eqref{cong-24}, we need to look into the power series expansion from equation \eqref{seq4new}  Let's use the fact that the square of any integer can belong to either one of the following residue classes $\{0, 1, 4, 9,13,16,25,28\} \pmod{36}$. 	For every $i^2, j^2 \in \{0, 1, 4, 9,13,16,25,28\}$, it is very easy to check that $27i^2, i^2+27 j^2, 8i^2+27j^2, 8i^2 +j^2$ are not equivalent to $15$ in modulo $36$. This completes the proof.
\end{proof}
\begin{remark}
	We proved new congruence in \eqref{cong-24}, although the congruence occurs indirectly in this \cite{AMS} paper, our proof technique is with the mere usage of theta function identities.
\end{remark}
\section{Congruences for $\overline{R_{16,81}}(n)$ }\label{(16, 81)-result}
\begin{theorem}
	For all $n\geq 0$, we have
	\begin{align}
	\overline{R_{16,81}}(36n + 33)&\equiv 0 \pmod {8},\label{cong-25}
	\end{align}
\end{theorem}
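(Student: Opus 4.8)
The plan is to follow verbatim the strategy used for the pair $(8,27)$ in the proof of \eqref{cong-24}, relying only on the summation formula for $\varphi(q)$ and a residue-class argument modulo $36$. First I would write the generating function in the theta-quotient form
\begin{align*}
\sum_{n\geq 0}\overline{R_{16,81}}(n)q^n = \frac{\varphi(-q^{16})\varphi(-q^{81})}{\varphi(-q)\varphi(-q^{1296})},
\end{align*}
since $16\cdot 81 = 1296$. The factor $\varphi(-q^{1296})$ is a power series in $q^{1296}$, and $1296 = 36^2$, so its reciprocal contributes only exponents that are multiples of $36$; hence it cannot change which residue classes modulo $36$ occur, and I may discard it while hunting for terms $q^{36n+33}$.

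Next I would apply $\frac{1}{\varphi(-q)} = \varphi(q)\varphi(q^2)^2\varphi(q^4)^4\cdots$ together with $\varphi(q^i)^j \equiv 1 \pmod 8$ for all even $j\geq 4$, reducing the surviving product modulo $8$ to the numerator $\varphi(-q^{16})\varphi(-q^{81})\varphi(q)\varphi(q^2)^2$. Expanding each factor via $\varphi(q)=1+2\sum_{n\geq1}q^{n^2}$ and multiplying out, every summand that is a product of three or more of the ``$2\sum$'' pieces carries a factor of $8$ and vanishes. What remains modulo $8$ is a constant, plus coefficient-$2$ terms with exponents $n^2$, $16n^2$, $81n^2$, plus coefficient-$4$ terms with exponents $2n^2$, $2(i^2+j^2)$, $i^2+81j^2$, $16i^2+81j^2$, $16i^2+j^2$.

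Finally I would perform the residue-class check modulo $36$. Recalling that every square lies in $\{0,1,4,9,13,16,25,28\}\pmod{36}$, and reducing $16\equiv 16$ and $81\equiv 9\pmod{36}$, I would tabulate the values modulo $36$ of each surviving exponent form and verify that none equals $33$. Several forms ($2n^2$, $2(i^2+j^2)$, $16n^2$) are even and so cannot meet the odd target; for the genuinely odd forms such as $n^2$, $i^2+81j^2$, $16i^2+81j^2$, $16i^2+j^2$ the check reduces to a short sumset computation (for instance $33-16i^2 \in\{33,29,17,5\}$ is never a square modulo $36$). Since no exponent in the surviving expression is congruent to $33\pmod{36}$, the coefficient of $q^{36n+33}$ is $\equiv 0\pmod 8$, giving \eqref{cong-25}.

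I expect the only delicate point to be bookkeeping rather than genuine mathematics: one must be sure the list of surviving exponent forms is complete, in particular that the cross terms arising from $\varphi(q^2)^2 \equiv 1 + 4\sum q^{2n^2} + 4(\sum q^{2n^2})^2$ and from the three pairwise products are all accounted for, and that the reductions $16\equiv 16$, $81\equiv 9\pmod{36}$ are applied consistently. Once the form list is pinned down, the verification that $33$ is unreachable is entirely mechanical, and the signs $(-1)^{i^2+j^2}$ are irrelevant because the target exponent never occurs at all.
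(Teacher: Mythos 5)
Your proposal is correct and follows essentially the same route as the paper: the identical theta-quotient form of the generating function, the same reduction modulo $8$ via $\frac{1}{\varphi(-q)}=\varphi(q)\varphi(q^2)^2\varphi(q^4)^4\cdots$, and the same residue-class check modulo $36$ using the square residues $\{0,1,4,9,13,16,25,28\}$. The only cosmetic difference is that you reduce $81\equiv 9\pmod{36}$ and tabulate each exponent form directly, whereas the paper notes that $16$ and $81$ are squares so every surviving exponent is a square, twice a square, or a sum of two squares, and then checks that $33$ is unreachable; these are the same computation.
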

\begin{proof}[Proof of Equation \eqref{cong-25}]
	Our strategy for proving the congruence in \eqref{cong-25} is same as the one we used for proving the congruences in \eqref{cong-10}, \eqref{cong-11}, \eqref{cong-12}. We begin by re-writing the generating function of $(16, 81)$- regular overpartitions 
	\begin{align}\label{pleqnew1}
	\sum_{n\geq 0}\overline{R_{16,81}}(n)q^n&= \frac{\varphi(-q^{16})\varphi(-q^{81})}{\varphi(-q)\varphi(-q^{1296})} 
	\end{align}
	Following the proof of \eqref{cong-10}, \eqref{cong-11}, \eqref{cong-12} we again utilize the identity $\frac{1}{\varphi(-q)} = \varphi(q)\varphi(q^2)^{2}\varphi(q^{4})^{4} \cdots$ and the fact that $\varphi(q^i)^j \equiv 1 \pmod{8} \text{ for all even } j \geq 4$ which helps us get to the following equation from equation \eqref{pleqnew1}
	\begin{align}\label{pleqnew3}
	&\frac{\varphi(-q^{16})\varphi(-q^{81})}{\varphi(-q)}\notag  \\
	&\equiv \varphi(-q^{16})\varphi(-q^{81})\varphi(q)\varphi(q^2)^2 \pmod{8}\notag\\
	&\equiv \biggl(1 + 2 \sum_{n \geq 1}(-q^{16})^{n^2}\biggl) \biggl(1 + 2 \sum_{n \geq 1}(-q^{81})^{n^2}\biggl)\biggl(1 + 2 \sum_{n \geq 1}q^{n^2}\biggl) \biggl(1 + 2 \sum_{n \geq 1}q^{2n^2}\biggl)^{2} \pmod{8}.
	\end{align}
	Expansion of the above equation gives us the following 
	\begin{align}\label{pleqnew2}
	&\equiv 1 + 2 \sum_{n \geq 1}q^{n^2}  + 2 \sum_{n \geq 1}(-q^{16})^{n^2} + 2 \sum_{n \geq 1}(-q^{81})^{n^2}+4 \sum_{n \geq 1}q^{2n^2} +4\sum_{i,j \geq 1}q^{2(i^2+j^2)}\notag\\
	& +4  \sum_{i, j \geq 1}q^{i^{2} + 16 j^{2}} + 4  \sum_{i, j \geq 1}q^{16i^{2} +  81 j^{2}} + 4  \sum_{i, j \geq 1}q^{i^{2} +  81 j^{2}}  \pmod{8}.
	\end{align}

	Here the argument that the exponents in the expansion can never be $\{33\} \pmod{12}$ follows in a similar way as was for proving the congruence in \eqref{cong-24}.
	Again by deeper analysis of the power series expansion from equation \eqref{pleqnew2} we can clearly see that the exponents of $q$ are either squares or $2$- times a square number or the sum of two squares. The square of any integer can belong to either one of the following residue classes $\{0, 1, 4, 9, 13, 16, 25, 28\} \pmod{36}$ and if we check the residues of sum of two squares modulo $36$, we will be able to see that the residue $33$ will never occur. This argument directly leads us to the fact that there will be no $q$ terms with the exponents of the form $36n + 33$.
\end{proof}
\begin{remark}
	It is important to note that this result will directly imply that $$ \overline{R}_{16, 81}(36n + 33) \equiv 0 \pmod{24},$$ given we have the proof of $\overline{R}_{16, 81}(36n + 33) \equiv 0 \pmod{3}$. 
\end{remark}
\section{Concluding Remarks}\label{Conclude}
We want to conclude by pointing out that some of our results immediately implies some of the congruences which were stated in the paper \cite{AMS}
\begin{align}
\overline{R}_{4, 3}(12n + 7) \equiv 0 \pmod{24}, \\
\overline{R}_{4, 3}(12n + 11) \equiv 0 \pmod{72}, \\
\overline{R}_{4, 9}(18n + 12) \equiv 0 \pmod{24}, \\
\overline{R}_{4, 9}(18n + 15) \equiv 0 \pmod{24}.
\end{align}
It will be interesting if one can come up with elementary proofs for the following congruences 
\begin{align}
\overline{R}_{8, 27}(36n + 15) \equiv 0 \pmod{3}, \\
\overline{R}_{16, 81}(36n + 33) \equiv 0 \pmod{6}.
\end{align}
If one can give elementary proofs of the above congruences then by combining our results in \eqref{cong-24} and \eqref{cong-25} we will have desirable elementary proofs for these congruences given in \cite{AMS}
\begin{align}
\overline{R}_{8, 27}(36n + 15) \equiv 0 \pmod{24},\\
\overline{R}_{16, 81}(36n + 33) \equiv 0 \pmod{48}.
\end{align}

It might be also possible to come up with proofs for congruences modulo higher powers of $2$ by using the method we used to prove the congruences modulo $8$, in case one knows the specific structure of $n$. For general $n$, without a specific structure one might need to check whether there are solutions to some of the quadratic Diophantine equations which occurs in the exponents. 

\section{Acknowledgement} 
Suparno Ghoshal was supported by the Deutsche Forschungsgemeinschaft (DFG, German Research Foundation) under Germany's Excellence Strategy -- EXC 2092 CASA -- 390781972.

\end{document}